\def\leaderfill{\leaders\hbox to .25em{\hss.\hss}\hfill}
\def\Bbb{\mathbb}
\def\Cal{\mathcal}
\def\Dt{\partial_t}
\def\eb{\varepsilon}
\def\eps{\varepsilon}
\def\bv{\mathbf{v}}
\def\R {\mathbb{R}}
\def\<{\left<}
\def\>{\right>}
\def\Ree{\operatorname{Re}}
\def\Imm{\operatorname{Im}}
\def\Dx{\Delta_x}
\def\tilde{\widetilde}
\def\hat{\widehat}
\def\({\left(}
\def\){\right)}
\def\Res{\operatorname{Res}}
\def\qand{\quad\mbox{and}\quad}
\def\ri{\text{i}}
\def\rd{\text{d}}
\def\re{\text{e}}
\def\fr{\mbox{$\frac{1}{2}$}}
\def\DT{\partial_T}
\def\DX{\partial_X}
\newtheorem{proposition}{Proposition}[section]
\newtheorem{theorem}[proposition]{Theorem}
\newtheorem{corollary}[proposition]{Corollary}
\theoremstyle{definition}
\newtheorem{remark}[proposition]{Remark}
\numberwithin{equation}{section}
\def \no#1#2#3 {{\bf #1} (#3), #2.}
\def \eds#1#2#3 {#1, #2, #3.}
\begin{document}
\begin{center}
  \textbf{\Large Validity of the hyperbolic Whitham}\\[2mm]

    \textbf{\Large modulation equations in Sobolev spaces}
\vspace{.75cm}

\textsf{\large Thomas J. Bridges$^1$, Anna Kostianko$^{1,2}$, {\small and} Sergey Zelik$^{1,2}$}
\vspace{.25cm}

\textit{1. Department of Mathematics, University of Surrey, Guildford GU2 7XH, UK}\footnote{Email: \textcolor{blue}{T.Bridges@surrey.ac.uk},
  \textcolor{blue}{Anna.Kostianko@surrey.ac.uk}, \textcolor{blue}{S.Zelik@surrey.ac.uk}}

\textit{2. School of Mathematics and Statistics, Lanzhou University, Lanzhou 730000 P.R. China}
\vspace{1.0cm}

\setlength{\fboxsep}{10pt}
\doublebox{\parbox{12cm}{
{\bf Abstract.}
It is proved that modulation in time and space of periodic
wave trains, of the defocussing nonlinear Schr\"odinger equation,
can be approximated by solutions of the Whitham modulation equations,
in the hyperbolic case, on a natural time scale.
The error estimates are based on existence, uniqueness, and energy arguments,
in Sobolev spaces on the real line.  An essential part of the proof
is the inclusion of higher-order corrections to Whitham theory,
and concomitant higher-order energy estimates.
}}
\end{center}
\vspace{.15cm}


\section{Introduction}
\setcounter{equation}{0}
\label{sec-intro}

We study the validity of slow modulation of space-time periodic solutions
of the cubic nonlinear Schr\"odinger (NLS) equation
\begin{equation}\label{nls-1}
\ri \partial_t \Psi + \partial_{x}^2
\Psi+ \gamma |\Psi|^2\Psi = 0 \,,
\end{equation}
where $\Psi(t,x)$ is complex valued, $\gamma=\pm1$, $x\in\R$ and $t\geq0$.
This equation
 possesses an exact three-parameter family of plane wave solutions
\begin{equation}\label{nls-wavetrain}
  \Psi(t,x) =  \Psi_0(\omega,k) \re^{\ri\theta}\,,\quad
  \theta=\omega t+kx+\theta_0\,,
\end{equation}
where $\Psi_0$ is real and positive, $\theta_0$ is a constant, and $(\omega,k)$
are the frequency and wave number.  Substitution into (\ref{nls-1})
gives the nonlinear dispersion relation: $\gamma \Psi_0^2=\omega+k^2$.

Modulated solutions are obtained by
introducing the standard geometric optics ansatz:
\begin{equation}\label{0.ans}
  \Psi(t,x) =
  \widetilde\Psi(T,X,\eb):=\widetilde A(T,X,\eb)e^{\ri\eb^{-1}(\omega T+kX+\widetilde\phi(T,X,\eb))},
\end{equation}
where $\eb\ll1$ is a small parameter, $T=\eb t$, $X=\eb x$
and $\widetilde A(T,X,\eb)$ and
$\widetilde \phi(T,X,\eb)$
are the slowly varying amplitude and phase respectively.
Substituting (\ref{0.ans}) into (\ref{nls-1}), separation of real and imaginary
parts, taking the limit $\eps\to0$, and differentiating
the real part with respect to $X$, gives the
\emph{Whitham modulation equations} (WMEs) in the form
\begin{equation}\label{wmes-intro}
\partial_T A + 2(k+ u)\partial_X  A +  A\partial_X u =0 \qand
\partial_T  u +2(k+ u)\partial_X u - 2\gamma  A\partial_X  A = 0\,.
\end{equation}
In these equations
\begin{equation}\label{A-phi-u-def}
A(T,X) = \widetilde{A}(T,X,0)\,,\quad
\phi(T,X)=\widetilde{\phi}(T,X,0)\,,\ \mbox{and}\ u(T,X)
:=\partial_X\phi(T,X)\,.
\end{equation}
The main aim of this paper is to prove that solutions of the WMEs
\eqref{wmes-intro} stay close to the exact solution of (\ref{nls-1}),
when commensurately initialized,
relative to a metric based on the Sobolev space $H^s(\R)$ for some
positive index $s$, when $\eb$ is sufficiently small.

The overarching motivation for this work is validity
of the WMEs in general, starting from an abstract Lagrangian,
or abstract Euler-Lagrange equation.
This target is currently intractable, and progress to date has been
achieved by proving validity of reduction to the WMEs
  for specific equations. Remarkably,
  validity proofs for the WMEs are rare and the only proofs known
  to the authors are validity of the reduction from PDEs of
  Korteweg-de Vries (KdV) type (e.g. \textsc{Bronski et al.}~\cite{bj10,bhj16}
  and references therein)
  and validity of the reduction of the NLS equation to the WMEs
  (e.g.\ \textsc{D\"ull \& Schneider}~\cite{ds09}).
  In the same spirit as validity, the paper
  \textsc{Benzoni-Gavage et al.}~\cite{bnr14} proves,
  for Hamiltonian PDEs of KdV type,
  the link between hyperbolicity of the WMEs and the spectral stability
  of periodic traveling waves to sideband perturbations.

  In this paper the strategy is to start with a fairly simple PDE,
  the cubic NLS equation, but give comprehensive results on how solutions
  of the WMEs
  \eqref{wmes-intro} stay close to the exact solution of (\ref{nls-1})
  in Sobolev spaces.
  Local existence and uniqueness of solutions of both (\ref{nls-1}) and
(\ref{wmes-intro}) is relatively straightforward (the latter given in
\S\ref{sec-swes}, the former a consequence of the proof in
\S\ref{sec-exactsolutions}).  Indeed, with $A=\sqrt{h}$ and
$\gamma=-1$ the equations (\ref{wmes-intro}) are just a variant
of the classical shallow water equations (SWEs).  However, in our
study of (\ref{wmes-intro}) we introduce
higher-order energies which feed into a proof of validity of higher-order
Whitham theory in \S\ref{sec-approx-solns}.  Subtleties do arise
when we start comparing the solutions of (\ref{wmes-intro}) to solutions
of (\ref{nls-1}).
The most surprising of which is that it will be essential
to include higher-order Whitham theory in the approximation
\begin{equation}\label{ur-series}
  \widehat A(T,X,\eps)  =
  A(T,X) + \eb^2A_1(T,X) + \cdots + \eb^{2n}A_n(T,X)\,,
\end{equation}
for some fixed and finite $n$,
with similar expansions for the other variables.
These series may not be convergent, but only estimates for the
existence of a fixed and finite sum of these terms is required.
Indeed $n=1$ is sufficient, but our proof is stated in
terms of arbitrary but finite $n$.  The study
of expansions of the form (\ref{ur-series}) is given in
\S\ref{sec-approx-solns}.
Other idiosyncrasies are recorded in \S\ref{sec-validity-setup}.
Before stating the main result, we review the related literature
on analysis of NLS, from the WKB, semi-classical and integrability
perspectives, and emphasize the new features that arise when
we segue into the specifics of the WMEs validity problem.
\par

There is an extensive literature on the rigorous analysis of
approximate solutions
of NLS \eqref{nls-1} from the perspective of WKB theory and
semi-classical analysis.
The most studied case is for vanishing basic state, where
$\omega=k=0$ and $\Psi_0=0$, and $s$ can be taken to be zero.
This case is referred as
the {\it semiclassical limit} or
{\it supercritical nonlinear optics limit} and is usually studied
    by expanding the functions $\widetilde A$ and
    $\widetilde\phi$ in Taylor series in $\eb$ similar to
    \eqref{ur-series},
    the so-called
    WKB expansions (see e.g. \textsc{Carles}~\cite{carles-book}
  and references therein). In the defocussing case $\gamma=-1$ the leading terms in these
  expansions are related to the compressible Euler equation
  (and the SWEs in the 1D case) and, since the local solvability of these equations in Sobolev spaces is well-understood (e.g.\ \textsc{Majda}~\cite{Majda}),
  the recurrent  equations for the WKB expansions can be solved
  (also locally) in Sobolev spaces and the
  corresponding error estimates deduced
  (see \cite{carles-book} for details).  In the case $\gamma=+1$,
  the leading order equations are ill-posed and the WKB expansions are
  more delicate. In this case the WKB expansions can still be effectively
  studied by working in spaces of analytic functions
  (e.g.\ \textsc{Gerard}~\cite{Ger}).
  \par
  Since the NLS equation (\ref{nls-1}) is integrable, by the Zakharov-Shabat
  formalism, there is  another body of work that
  incorporates this structure into the analysis, particularly
  in the study of the
  semiclassical approximation of the defocussing NLS equation
(e.g.\ Chapter 5 of \textsc{Kamchatnov}~\cite{k00},
\textsc{Jin, Levermore, \& McLaughlin}~\cite{jlm99}, \textsc{Carles}~\cite{carles-book}, \textsc{Grebert \& T. Kappeler}~\cite{gk14}
and references therein).
In the proofs in this paper integrability of the NLS equation is not
used in any way.
The main disadvantage being that our
results do not give any information about the
fine detail of the approximation error, and the structure of the oscillations
that arise in the $\eb\to0$ limit.
The main advantage is that the methodology extends
to non-integrable systems.
\par
Modulation of the general case where the
basic state is non-trivial, $\Psi_0=\Psi_0(\omega,k)\ne0$, which is
the principal case of interest in validity of WMEs,
is much less studied from the WKB perspective.
Although the WKB analysis
 can be formally performed exactly as before, some new difficulties arise due to the fact that
 the amplitude $\Psi_0$ is not in $L_2(\R)$. This difficulty, and
 its appearance in the validity theory, is
 addressed in \S\ref{sec-validity-setup}.

 Another, probably more essential difficulty comes from the fact that
 the underlying wavetrain
   \eqref{nls-wavetrain} may a priori be linearly unstable (which
   is an essential feature of the self-focussing case).
   Unfortunately,
    the longtime validity in Sobolev
    spaces can not be achieved if the corresponding wavetrain
    is spectrally unstable.

    The validity problem in this case (that is, $\Psi_0\neq0$ and
    $\gamma=\pm1$)
    has been studied by \textsc{D\"ull \& Schneider}~\cite{ds09}
    in the framework of functions which are analytic in a strip
    about the real axis (so called Gevrey spaces). In this case,
the stability/instability properties of the underlying wavetrain are not
as important due to
the very fast decay of higher Fourier modes provided by the analyticity,
so that analysis works in both the focussing and defocussing cases.
Validity is proved in \cite{ds09}, locally in time, in Gevrey spaces.
 \par
 However, as pointed out in \cite{ds09}, the analysis in Gevrey spaces
 is not entirely satisfactory from a validity perspective, due
 to the required constraint on the time interval of existence.
 Indeed, it is usually expected that the WKB approximation will
 work at least until blow up of
 the corresponding smooth solution of the limit equations (= formation of caustics in the terminology
  of geometric optics), but it is not clear how to obtain this in the framework of analytic functions.
  Indeed, the standard technique
  is to work with time-dependent Gevrey spaces of
  analytic functions in a strip, in the complex plane about the
  real axis, with  shrinking size (linearly in time) where
  the rate of shrinking
  is determined by the norms of the initial data. For this reason, the
  width of this
  strip for the exact solution typically shrinks to zero fast,
  no matter how long the lifespan
      of the limit solution is, see \cite{ds09}.  As a result, the limit solution to the initial NLS equation can only be justified on a very small
      time interval (in slow time), no matter how big
      its lifespan actually is.

      This latter shortcoming motivates us to study the
      validity of the Whitham approximations for NLS
      in Sobolev spaces where this problem can potentially be
      overcome.  Indeed, the validity of these approximations
    in the defocussing case in Sobolev spaces is stated as
    an open problem  in \textsc{D\"ull \& Schneider}~\cite{ds09}.
    Only the defocussing case is suitable
    for the framework of Sobolev spaces, so we
      predominantly restrict attention henceforth to the case $\gamma=-1$.
    \par

        An outline of the paper is as follows.  First, before
proceeding with the proof of validity, we look more closely
    at the derivation and properties of the WMEs (\ref{wmes-intro})
    in \S\ref{sec-wmes}.
    The conventional view of Whitham theory
    based on an averaged Lagrangian is recorded,
    and then a rigorous derivation of the averaging and approximation
    process is given from first principles,
    which may have independent interest.

    The validity proof starts in \S\ref{sec-swes} with a proof
    of the local solvability of the WMEs
 \eqref{wmes-intro} in Sobolev spaces. The key observation here is its equivalence to the SWEs for which the local
 well-posedness is well understood. We utilize the shallow water
 energy and its natural
   analogues in higher order Sobolev spaces in order to get the desired result.
   \par

   At the next step  we verify in \S\ref{sec-approx-solns}
   the solvability of the recurrent
 equations for the higher order approximations to Whitham
 theory (\ref{ur-series}).
 Since these equations are linear at leading order and
 have the same structure as the linearised SWEs, we can construct
 these expansions using the same energy technique,
  already developed in \S\ref{sec-swes}.
 It is this construction that enables estimates
 for the residual terms to be pushed up to order $\eb^{2n}$
 in the theorem below.
  \par
  In \S\ref{sec-exactsolutions}
  we tie all the results together by proving rigorous estimates
on the comparison between the exact solution $\widetilde\Psi$,
  \begin{equation}\label{exact-solution-nls}
\widetilde\Psi(T,X,\eb)=\widehat\Psi(T,X,\eb)(1+W(T,X,\eb))\,,
  \end{equation}
  and the approximate solutions $\widehat\Psi$ in (\ref{0.ans}),
  of the NLS equation (\ref{nls-1}). In (\ref{exact-solution-nls}),
  the complex-valued function $W$
  (which measures the deviation of $\widehat\Psi$
  from $\widetilde\Psi$) satisfies an
  exact, but singularly perturbed, version of the NLS equation,
  and it is derived in \S\ref{sec-exactsolutions}. We prove
  the smallness of $W$
  using a special energy type estimate inspired by the proof
  of the spectral stability of the underlying wavetrain.
  The main result of the paper, stated here, is proved in
  \S\ref{sec-exactsolutions}.
  \vspace{.15cm}

 \begin{theorem}\label{Th0.main} Let $n\ge1$ in (\ref{ur-series})
 and fixed, and suppose the initial data for the NLS equation
 \eqref{nls-1} with $\gamma=-1$
  has the form
 \begin{equation}
\Psi(0,x)=e^{r_0(\eb x)}e^{\ri(kx+\eb^{-1}\phi_0(\eb x))},\ \ \eb\ll1\,,
 \end{equation}
for some constants $\omega$ and $k$ satisfying $\omega+k^2+1=0$ and some functions $(r_0,\phi_0)$ of the
slow variable $X=\eb x$ satisfying
$$
\|r_0\|_{H^{3n+5}(\R)}+\|\DX\phi_0\|_{H^{3n+5}(\R)}\le C,
$$
where the constant $C$ is independent of $\eb$.
Assume also that the associated WMEs \eqref{wmes-intro} with the
initial data $(r_0,\phi_0)$ possess a smooth solution on some time interval $T\le T_0$ which satisfies
$$
\|r(T)\|_{H^{3n+5}(\R)}+\| u(T)\|_{H^{3n+5}(\R)}\le C,\ \ T\le T_0\,,
$$
for some new constant $C$ also independent of $\eb$,
where $r(T,X)=\ln A(T,X)$.
Then the exact solution $\widetilde \Psi(T,X,\eb)$, of the
NLS equation with the same initial data, exists on a time interval
$T\in[0,T_0]$ and remains $\eb^{2n}$-close
  to the appropriate $n$th order Whitham modulation approximation
  $\widehat \Psi=\widehat Ae^{\ri\eb^{-1}\widehat\Theta}$
   in the following sense:
   \begin{equation}\label{est0-main}
   \|(\widetilde\Psi(T,\cdot)-\widehat\Psi(T,\cdot))e^{-\ri\eb^{-1}\widehat\Theta(T,\cdot)}\|_{H^1(\R)}
   \le C\eb^{2n},\ \ T\le T_0,
   \end{equation}
   where the constant $C$ is independent of $\eb$.
\end{theorem}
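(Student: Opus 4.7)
The plan is a singular-perturbation energy argument for the exact deviation from the higher-order Whitham ansatz. Fix $n\ge 1$ and use the construction in Section~\ref{sec-approx-solns} to obtain an approximation $\widehat\Psi = \widehat A\,e^{\ri\eb^{-1}\widehat\Theta}$ for which $\widehat A$ is bounded away from $0$ and $\infty$, the initial data $\widehat\Psi(0,\cdot)$ coincides with the prescribed $\Psi(0,\cdot)$, and the NLS residual $R := \ri\pt\widehat\Psi + \px^2\widehat\Psi - |\widehat\Psi|^2\widehat\Psi$ satisfies $\|R\|_{H^s(\R)} \le C\eb^{2n+2}$ in a Sobolev index $s$ fixed by the hypotheses on $r_0,\phi_0$. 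Writing the exact solution as $\widetilde\Psi = \widehat\Psi(1+W)$ converts the theorem into an estimate on $W$: indeed $(\widetilde\Psi - \widehat\Psi)e^{-\ri\eb^{-1}\widehat\Theta} = \widehat A\,W$, so it suffices to prove $\|W(T)\|_{H^1(\R)} \le C\eb^{2n}$ on $[0, T_0]$, starting from $W(0,\cdot)\equiv 0$.

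Substituting and dividing through by $\widehat\Psi$ gives the exact (singularly perturbed) $W$-equation
\begin{equation*}
\ri\pt W + \px^2 W + 2\frac{\px\widehat\Psi}{\widehat\Psi}\,\px W - |\widehat A|^2(1+W)(2\Ree W + |W|^2) = -\frac{(1+W)R}{\widehat\Psi},
\end{equation*}
whose coefficient $\px\widehat\Psi/\widehat\Psi = \eb\,\DX\ln\widehat A + \ri\eb^{-1}(k + \eb\,\DX\widehat\phi)$ carries a singular $\ri\eb^{-1}k$ piece. For the $H^1$-estimate on $W$, I would use a Lyapunov functional of modulational-stability type,
\begin{equation*}
\H(W) := \|W\|_{L^2(\R)}^2 + \|\px W\|_{L^2(\R)}^2 + 2\int_\R |\widehat A|^2(\Ree W)^2\,dx,
\end{equation*}
which is coercive on $H^1(\R)$ uniformly in $\eb$ and, in the limit of constant $\widehat A,\widehat\phi$, reduces to the conserved sideband-energy for the NLS linearisation about a defocussing plane wave. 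Set up a bootstrap assumption $\|W\|_{H^1}\le \eb^{n}$ (trivially valid at $T=0$), so that the cubic nonlinearity contributes at most $O(\eb^{3n})$ to $d\H/dt$. Testing the equation against $-\ri\bar W$ and $-\ri\px^2\bar W$, the leading singular transport $2\ri k\eb^{-1}\px W$ is anti-selfadjoint and integrates out; the remaining coefficient $\DX\widehat\phi$ is bounded, and every subsequent commutator of an $\eb^{-1}$-singular coefficient with $\px$ takes the form $\eb^{-1}\px(\cdot) = \DX(\cdot)$, which is uniformly bounded since $\widehat A,\widehat\phi$ are slowly varying. This yields a differential inequality $d\H/dt \le C\H + C'\eb^{4n+4}$; Gr\"onwall together with $\H(0)=0$ delivers $\H(T)\le C''\eb^{4n+4}$ on $[0,T_0]$, closing the bootstrap and establishing \eqref{est0-main}.

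The main obstacle is to accommodate three features of the $W$-equation simultaneously: the singular $O(\eb^{-1})$ transport, which forces the energy to be $\eb$-uniformly coercive; the non-selfadjoint cross-coupling $-2|\widehat A|^2\Ree W$ arising from the linearised defocussing nonlinearity, which shuffles $L^2$-mass between $\Ree W$ and $\Imm W$ and rules out a plain $H^1$-energy; and the slow $X$-dependence of $\widehat A,\widehat\phi$, which breaks the exact conservation enjoyed by the constant-coefficient sideband-stability energy. The hybrid $\H$ proposed above is designed precisely to reconcile all three, and the crucial analytic point to verify is that every $\eb^{-1}$ generated while differentiating $\H$ along the flow is ultimately paired with a slow derivative of a coefficient, restoring the compensating factor $\eb$. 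A subsidiary obstacle, already handled by Section~\ref{sec-approx-solns}, is producing an approximation whose residual $R$ is genuinely $\eb^{-1}$-small in $H^1$; this is exactly why higher-order Whitham corrections $A_1,\ldots,A_n$ in \eqref{ur-series} must be included rather than only the leading-order approximation, and why the regularity hypothesis on $r_0,\phi_0$ grows as $H^{3n+5}$ with the order $n$.
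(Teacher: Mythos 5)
Your overall architecture (construct the order-$n$ approximation with residual $O(\eb^{2n+2})$, write $\widetilde\Psi=\widehat\Psi(1+W)$, and run a stability-inspired energy estimate on $W$) is the same as the paper's, but the specific energy functional you propose does not work, and the differential inequality $d\H/dt\le C\H+C'\eb^{4n+4}$ you claim is false. The obstruction is exactly the long-wave Jordan cell analysed in \S\ref{sec-stability-and-validity}: with $W=W_1+\ri W_2$, the linearisation about the wavetrain is \eqref{llinear}, whose Fourier symbol has a nilpotent block at $\xi=0$, so that $\|W_2(t)\|_{L^2}$ grows linearly in the fast time $t$ even though $\|\px W\|_{L^2}$ and $\|W_1\|_{L^2}$ stay bounded. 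Concretely, the term that defeats your functional is the zeroth-order coupling $-2|\widehat A|^2 W_1$ in the $W_2$-equation: when you differentiate the $\|W_2\|_{L^2}^2$ part of $\H$ along the flow you pick up $-4\big(|\widehat A|^2W_1,W_2\big)_{L^2}$, which contains \emph{no} derivative of a slowly varying coefficient and hence no compensating factor of $\eb$. In slow time this term carries an explicit $\eb^{-1}$, so the best Gr\"onwall constant available for your $\H$ (which weights $\|W_1\|_{L^2}^2$ and $\|W_2\|_{L^2}^2$ equally) is $O(\eb^{-1})$, giving $e^{C T_0/\eb}$ — useless. Your guiding principle that ``every $\eb^{-1}$ is ultimately paired with a slow derivative'' fails precisely here.

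The paper's repair is an \emph{anisotropic} energy, $\|\px W\|_{L^2}^2+\eb^{-2}\|W_1\|_{L^2}^2$, in which the real part is weighted by $\eb^{-2}$ (see \eqref{4.linest} and the test functions used in the proof of Theorem \ref{Th4.lin}); the $L^2$-norm of $W_2$ is \emph{not} part of the conserved-type quantity and is recovered afterwards at the cost of one power of $\eb$. This loss is genuine, not an artifact: \S\ref{sec-stability-and-validity} shows the linear problem is unstable in $H^1\times H^1$, so your claimed bound $\|W\|_{H^1}\le C\eb^{2n+2}$ cannot hold in general. It is exactly this unavoidable loss (one power from the $\eb^{-1}\Res_\phi$ prefactor in the $W$-equation, one from the instability, cf. the $\eb^{-2}$ in \eqref{4.grr}) that reduces the rate from $\eb^{2n+2}$ to $\eb^{2n}$ in \eqref{est0-main} and forces $n\ge1$ — with your accounting the role of the hypothesis $n\ge1$ disappears, which is a sign the bookkeeping is off. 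Relatedly, your bootstrap is inconsistent on the nonlinear side: an $O(\eb^{3n})$ contribution to $d\H/dt$ integrated over $[0,T_0]$ yields only $\H\lesssim\eb^{3n}$, far weaker than the $\eb^{4n+4}$ you conclude; the paper instead closes the nonlinearity through the $\eb^{-2}\|F(W)\|_{H^1}^2$ term in \eqref{4.grr}, where $n\ge1$ again enters essentially.
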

 \vspace{.15cm}

 \begin{remark}
   Using the fact that $H^1\subset L^\infty$, and under the assumptions of
 the theorem, we can also compare the solutions $\widehat \Psi$ and $\widetilde\Psi$ without the phase factor  $e^{\ri\eb^{-1}\widehat\Theta}$:
  $$
  \|\widetilde \Psi(T,\cdot)-\widehat\Psi(T,\cdot)\|_{L^\infty(\R)}\le C\eb^{2n},\ \ T\le T_0\,.
  $$
  However, if we want to compare this distance in higher Sobolev norms,
  it will be necessary to
  decrease the order of approximation due to the factor $\eb^{-1}$
  in the phase.
  \end{remark}

  \begin{remark}
  We also would like
  to emphasize that the order of approximation in estimate (\ref{est0-main})
  in Theorem \ref{Th0.main} is $\eb^{2n}$,
  not $\eb^{2(n+1)}$ as one might expect. For this reason, we can not take
  $n=0$ in (\ref{ur-series}) and need to
  retain at least one higher-order term in (\ref{ur-series}).
  It is not clear whether this restriction is
    technical, or something more fundamental is behind it.
    \end{remark}

\section{Variations on Whitham modulation theory}
\label{sec-wmes}
\setcounter{equation}{0}

The modulation ansatz (\ref{0.ans}) is just a geometric
optics approximation.  What makes it Whitham modulation theory (WMT)
is the identification of the first equation in (\ref{wmes-intro}) with
the ``conservation of wave action'' and the
second equation in (\ref{wmes-intro}) with the ``conservation of
waves''.  Also important in Whitham theory is that an averaged
Lagrangian is the organising centre.  It is this triple,
and the generation of conservation of wave action via Noether's Theorem
from the averaged Lagrangian, that gives WMT its universality.
Background on WMT can be
found in \textsc{Whitham}~\cite{w74}, \textsc{Kamchatnov}~\cite{k00},
\textsc{Bridges}~\cite{tjb17}, and references therein.

In this section, we discuss the steps leading up to the WMEs from
first principles
starting with an abstract Lagrangian for a general conservative PDE.
Let $L=L(\bv_t,\bv_x,\bv)$ be a given smooth function of the
vector valued function $\bv(t,x)$ for $t,x\in\R$, and suppose that
the variational principle
\[
\delta\int_{t_1}^{t_2}\int_{x_1}^{x_2}L(\bv_t,\bv_x,\bv)\,\rd x\rd t = 0\,,
\]
with fixed endpoints on the variations $\delta \bv$ generates the
governing equations.
The corresponding Euler-Lagrange equation reads
\begin{equation}\label{eq.EUg}
\partial_t\(L_{\bv_t}(\bv_t,\bv_x,\bv)\)+\partial_x\(L_{\bv_x}(\bv_t,\bv_x,\bv)\)=
L_{\bv}(\bv_t,\bv_x,\bv)\,.
\end{equation}
Now assume that this equation possesses a three parameter family of
single-phase space-time periodic solutions
\begin{equation}\label{eq.wave}
\bv(t,x):=\bv_0(\theta,\omega,k), \ \ \theta=\omega t+kx+\theta_0,\ \ \omega,k,\theta_0\in\R\,,
\end{equation}
where $\bv_0(s,\omega,k)$ is a smooth function, which is $2\pi$-periodic
with respect to $s$
for all $\omega$ and $k$.  Inserting this solution into the Euler-Lagrange
equation,
 gives us a differential equation for $\bv_0(s,\omega,k)$
\begin{multline}\label{EL-idenity}
\(L''_{\bv_t,\bv_t}(\partial_s\bv_0\, \omega,\partial_s\bv_0\, k,\bv_0)\omega^2+
2L''_{\bv_t,\bv_x}(\cdots)\omega k+
L''_{\bv_x,\bv_x}(\cdots)k^2\)\partial_{s}^2\bv_0 +\\+\(L''_{\bv_t,\bv}(\cdots)\omega+
L''_{\bv_x,\bv}(\cdots)k\)\partial_s\bv_0=L'_\bv(\cdots).
\end{multline}
Motivated by geometric optics theory, we
seek for a modulated solution of \eqref{eq.EUg} in the form
\begin{equation}\label{eq.ANg}
\bv(t,x)=\bv_0(\eb^{-1}\Theta(T,X,\eb),\Omega(T,X,\eb),Q(T,X,\eb))+\eb W(T,X,\eb)
\end{equation}
where $\eb\ll1$, $T=\eb t$, $X=\eb x$ are slow variables.  The
unknown functions $\Theta(T,X,\eb)$, $\Omega(T,X,\eb)$ and $Q(T,X,\eb)$ are
slowly-varying functions which are
responsible for evolution along the wavetrain manifold,
and the function $W(T,X,\eb)$ measures
  the evolution in
  transversal directions.  Usually some kind of point-wise orthogonality conditions
  for $\bv_0$ and $W$ are posed in order
   to determine $W$ in a unique way, however, we have $W\equiv0$ in our particular case of NLS, so we prefer
    not to specify any such conditions here.
\par
In order to (formally) get the approximate equations
for quantities $\Theta$, $\Omega$ and $Q$, we substitute the ansatz
\eqref{eq.ANg} into the Lagrangian while dropping the terms of order $\eb$
and higher.
This gives us the truncated Lagrangian
\begin{equation}\label{eq.trunc}
 L_0(s,\Theta_X,\Theta_T,\Omega,Q):=L(\partial_s\bv_0(s,\Omega,Q)\Theta_T,
 \partial_s \bv_0(s,\Omega,Q)\Theta_X,\bv_0(s,\Omega,Q)),\ \ s:=\eb^{-1}\Theta.
\end{equation}
However, this Lagrangian still contains rapidly oscillating terms related
with $\eb^{-1}\Theta$,  so we need to (again formally)
 introduce averaging to get the reduced Lagrangian
\begin{equation}\label{eq.red}
 \Cal L(\Theta_T,\Theta_X, \Omega,Q)=\<L_0(s,\Theta_T,\Theta_X,\Omega,Q)\>_s,
\end{equation}
 where $\<f\>_s=\frac1{2\pi}\int_0^{2\pi} f(s)\,ds$. Taking variational derivatives with
  respect to $\Omega$ and $Q$, we end up with two equations
\begin{equation}\label{eq.OQ}
 \begin{cases}
 \<L'_{\bv_t}\partial_{s\omega}^2\bv_0\>_s\Theta_T+\<L'_{\bv_x}\partial^2_{s\omega}\bv_0\>_s\Theta_X=-
 \<L'_\bv\partial_\omega\bv_0\>_s\\
\<L'_{\bv_t}\partial^2_{sk}\bv_0\>_s\Theta_T+\<L'_{\bv_x}\partial^2_{sk}\bv_0\>_s\Theta_X=-
\<L'_\bv\partial_k\bv_0\>_s
 \end{cases}
\end{equation}
which are satisfied point-wise for all $T,X\in\R$.
This is the linear system with respect to the variables $\Theta_T$ and $\Theta_X$. We claim that
\begin{equation}\label{eq.exact}
\Theta_T=\Omega,\ \ \Theta_X=Q
\end{equation}
solves this system.  A novelty here is that conservation of waves is deduced
from the averaged Lagrangian rather than assuming it \emph{a priori}.

The assertion (\ref{eq.exact})
can be verified by integrating by parts
the integrals in the left-hand side of \eqref{eq.OQ}
(moving the $s$-derivative from the function $\bv_0$ to
$L'_{\bv_t}$ and $L'_{\bv_x}$ and using the identity \eqref{EL-idenity}).
In general, the determinant of the system \eqref{eq.OQ} does not vanish identically,
so the solution \eqref{eq.OQ} is unique.  However, it may be not so in degenerate cases like our NLS example which will be considered in the subsection
below.
   \par

   Inserting \eqref{eq.exact} into the reduced Lagrangian, we finally arrive
   at the
   standard WME averaged Lagrangian
\begin{equation}\label{eq.WMT}
\mathcal L_{\rm WME}(\Omega,Q):=\<L_0(s,\Omega,Q,\Omega,Q)\>_s\,,
\end{equation}
which should be considered under the extra constraint
\begin{equation}\label{eq.con}
\Omega_X=Q_T.
\end{equation}
The associated Euler-Lagrange equations finally give us the desired Whitham
 modulation equations (WMEs) in a standard form
\begin{equation}\label{wave-action}
\partial_T\mathscr{A}(\Omega,Q) +
\partial_X \mathscr{B}(\Omega,Q) =0\,,
\end{equation}
with $\mathscr{A}:=\partial_\Omega\mathscr{L}_{\rm WME}$ and $\mathscr{B}=\partial_Q\mathscr{L}_{\rm WME}$.
The equation (\ref{wave-action}) is called ``conservation of wave action''
in \cite{w74}. Of course, this equation should be considered together with \eqref{eq.con} which gives
 a closed system of two quasi-linear first order equations for determining the wave
  frequency $\Omega$ and wave number $Q$. The wave phase $\Theta$
 is determined after that from the relation
\begin{equation}\label{eq.EF}
d\Theta=\Omega\,dT+Q\,dX
\end{equation}
and the exactness of this differential form is guaranteed by \eqref{eq.con}. Using \eqref{eq.exact}
 these equations can be also rewritten in terms of a single second order quasi-linear
 PDE for the phase $\Theta$:
\begin{equation}\label{theta-action}
\partial_T\mathscr{A}(\Theta_T,\Theta_X) +
\partial_X \mathscr{B}(\Theta_T,\Theta_X) =0\,.
\end{equation}
\begin{remark} There is an effective machinery which allows us to compute the WME
Lagrangian $\Cal L_{\rm WME}(\omega,k)$, namely, we may just put the
 family of solutions \eqref{eq.wave} where $\omega$ and $k$ are constants
  into the initial Lagrangian $L$ and compute the result as a function of $\theta$, $\omega$ and $k$.
   Performing thereafter averaging with respect to the variable $\theta$, we end up
    exactly with the WME lagrangian $\Cal L_{\rm WME}(\omega,k)$, see \eqref{eq.trunc}, \eqref{eq.red}
    and \eqref{eq.WMT}.  These steps are essentially how
    the scheme is usually presented in the
     literature, see e.g. \cite{w74,ds09,tjb17}.
     However, in the conventional approach the extra constraint
     \eqref{eq.con} is just postulated. For this reason, we have given
     a bit more detailed
     derivation here of the WMEs which includes the emergence of this constraint as well.
     Moreover,
        the precise form \eqref{eq.ANg} that we are seeking for the modulated solution is
        also important for the forthcoming justification of the above
        formal procedures. Finally,
        we have presented here the result for a general Lagrangian $L(\bv_t,\bv_x,\bv)$ since the particular case of
        the NLS Lagrangian is "too degenerate" to see the key features
        of the theory.
\end{remark}

\subsection{WMT for the cubic NLS equation}
\label{subsec-wmt-nls}

Now restrict attention to the cubic NLS equation, which is generated
by the Lagrangian
\begin{equation}\label{L-nls-def}
L = \fr \texttt{i} \big( \overline\Psi \partial_t\Psi-\Psi\overline\partial_t\Psi\big)
- \big|\partial_x\Psi\big|^2 + \fr\gamma |\Psi|^4\,.
\end{equation}
The Euler-Lagrange equation, obtained by taking variations
of the integral of $L$ with fixed endpoint conditions, is the
cubic NLS (\ref{nls-1}).

The family of space-time
  periodic solutions \eqref{eq.wave} is given explicitly by
  \begin{equation}\label{wave-NLS}
    \Psi_0(\theta,\omega,k) := \Psi_0(\omega,k)\re^{\ri\theta}\,,
    \quad\mbox{with}\ \gamma \Psi_0^2(\omega,k)=\omega+k^2\,.
  \end{equation}
  Express the amplitude as $\Psi_0(\omega,k)=
  A(\omega,k)$, as when modulated it will be the amplitude in
  the WMEs.  Then the ansatz \eqref{eq.ANg} for the modulated
  solution reads
  \begin{equation}\label{eq.mod-NLS}
\Psi(t,x)=A(\Omega(T,X),Q(T,X))\re^{\ri \frac1\eb \Theta(T,X)}.
  \end{equation}
In this case we do not have any transversal directions to the wavetrain manifold, so $W\equiv0$.
 This observation does not affect the leading order approximate equations
  (which do not contain $W$ in any case), but it is crucial when higher order approximations
   are considered, see next section.
   \par
The truncated Lagrangian $L_0$ now reads
\begin{equation}
L_0(s,\Theta_T,\Theta_X,\Omega,Q)=
-\frac12\gamma\(A^4(\Theta_T,\Theta_X)-(A^2(\Omega,Q)-A^2(\Theta_T,\Theta_X))^2\)
\end{equation}
and we see two important simplifications:
\par
 1) In contrast to the general case, the truncated
 Lagrangian is independent of $s$, so there is nothing to average here. This also makes
  the validity theory essentially simpler.
  \par
 2) The Lagrangian $L_0=\Cal L$ does not depend on $\Omega$ and $Q$ separately, but
 only on their combination $A^2:=\gamma^{-1}(\Omega+Q^2)$. For this reason, the functions
  $\Omega$ and $Q$ are not uniquely defined, but only their combination $A^2$ is. In particular,
  in this case, the determinant of system \eqref{eq.OQ} vanishes identically, so from these equations
  we get only the relation
  $$
  A^2(\Omega,Q)=A^2(\Theta_T,\Theta_X)\ \Longleftrightarrow\ \Omega+Q^2=\Theta_T+\Theta_X^2,
  $$
but for consistency with the general case, we may {\it define} $\Omega=\Theta_T$ and $Q=\Theta_X$.
\par
The WME Lagrangian now reads
$$
\Cal L_{\rm WME}(\Theta_T,\Theta_X)=-\frac12\gamma\(\Theta_T+\Theta_X^2\)^2
$$
and the corresponding WME has the following form
\begin{equation}\label{eq-WME-NLS}
\DT(\Theta_T+\Theta_X^2)+2\DX(\Theta_X(\Theta_T+\Theta_X^2))=0.
\end{equation}
Remarkably, this equation is independent of $\gamma\ne0$.
Moreover, it is not difficult to see that
it is hyperbolic if $\Theta_T+\Theta_X^2<0$, and elliptic if $\Theta_T+\Theta_X^2>0$. It is also
 straightforward to show that the sign of $\Theta_T+\Theta_X^2$ is preserved under the
  time evolution (at least until the solution is smooth). Keeping in mind that
   $\Theta_T+\Theta_X^2=\gamma A^2$ and the amplitude $A$ is real, we conclude that $\gamma>0$
   corresponds to the elliptic case and in the case $\gamma<0$ the equation is hyperbolic.
   \par
It is natural to rewrite equation \eqref{eq-WME-NLS} with respect to new
 variables
 $$
 h(T,X):=A^2(T,X)=\gamma^{-1}(\Theta_T+\Theta_X^2)
 $$
  and $u(T,X):=\DX\Theta(T,X)$. This gives
 \begin{equation}\label{eq-SWE}
\DT u+\DX(u^2-\gamma h)=0\qand
\DT h+2\DX(uh)=0\,.
 \end{equation}
 In the hyperbolic case $\gamma<0$ this system is the classical
 SWEs (up to scaling $2u\to u$), and we will utilize
 the connection between the hyperbolic WMEs and the classical SWEs
 in our proof of validity, in order to get the local solvability of WMEs in
 Sobolev spaces. In order to handle  the degeneracy of the SWEs energy at
 $h=0$, it is also useful to write the amplitude $A$ in the form $A=e^r$
 which leads to the following equations for $(r,u)$,
\begin{equation}\label{eq.ru}
  \begin{array}{rcl}
    && \partial_T r + 2u\partial_X r + \partial_Xu = 0 \\[2mm]
    && \partial_T u + 2u\partial_X u - 2\gamma\re^{2r}\partial_X r = 0 \,.
  \end{array}
\end{equation}

\section{Validity theory: setup and roadmap}
\label{sec-validity-setup}
\setcounter{equation}{0}

There are several dimensions to the approximation theory.
The backbone is the usual three steps in validity theory:
an existence theory for the original
equation, an existence theory for the reduced equation, and
the evolution of a
measure of the distance between the two (e.g.\ Part IV of
\textsc{Uecker \& Schneider}~\cite{su17}).  In addition,
idiosyncrasies arise
that are particular to the context of the
NLS to WMEs reduction in Sobolev spaces.
\par
The first key question is the choice of
function space.  In validity theory,
our main interest is the case where
the basic state (\ref{nls-wavetrain}) is non-trivial,
$\Psi_0\ne0$, and this solution
is {\it not square integrable} on the real line and so, in
contrast to WKB theory \cite{carles-book}, we cannot assume that the
amplitude $A(T)\in L^2(\R)$.  It
 would be natural to consider $A(T)\in L^\infty(\R)$ or $A(T)$ belonging to some uniformly
 local Sobolev space, but this is problematic since the
 NLS equation is not
  well-posed in such spaces.  As a compromise, we will assume that
  $$
  \lim_{X\to\pm\infty}A(T,X)=\Psi_0,\ \ \gamma \Psi_0^2=\omega+k^2,
  $$
so the amplitude of the modulated solution stabilizes as $X\to\pm\infty$ to
the amplitude of the basic wavetrain. The same assumption will be posed also for the amplitudes $\tilde A(T,X,\eb)$
of exact solutions of the NLS equation as well as their  $n$th order approximations $\widehat A(T,X,\eb)$.
\par

In contrast to \cite{ds09} where the problem has
 been considered in spaces of analytic functions, we cannot treat the elliptic case $\gamma>0$ in
 Sobolev spaces (elliptic equations are usually ill-posed in such spaces),
 so we have to
 assume that $\gamma<0$. Taking into the account that $\omega+k^2\ne0$,
 we may scale the variables $t$, $x$ and $\Psi$ in NLS and assume
 without loss of generality that
\begin{equation}\label{eq-scale}
\gamma=-1,\ \  \omega+k^2+1=0,\ \ \Psi_0=1.
\end{equation}

A valuable simplification that arises in the case
    $\Psi_0\ne0$ is that we may
    naturally separate the amplitude $A$ from the
    singularity at $A=0$ by assuming that
\begin{equation}\label{0.nosing}
  \ln\frac{A(T,\cdot)}{\Psi_0(\omega,k)}\in H^s(\R)\,,\quad
  \mbox{for some index $s>0$}\,.
\end{equation}
We will prove below that if this assumption is satisfied for
$T=0$ it will be preserved at least for small positive time $T\le T_0$,
and the same is
true when we replace $A$ by $\widetilde A$ and $\widehat A$
in (\ref{0.nosing}). {\color{black} This assumption is equivalent to
$$
 A(T,\cdot)-\Psi_0\in H^s(\R).
$$
The fact that the NLS equation is ill posed in $L^\infty(\R)$, as well as in
uniformly local Sobolev spaces, makes the assumption \eqref{0.nosing}
appear to be unavoidable here.}

Another overarching assumption is
  \begin{equation}\label{0.sob}
u(T,\cdot):=\DX\phi(T,\cdot)\in H^s(\R),\ \ T\ge0,
  \end{equation}
  for sufficiently large positive index $s$,
  with similar assumptions for exact solutions
  $(\widetilde A,\widetilde u)$
  and their $n$th order Whitham approximations $(\widehat A,\widehat u)$.

We now discuss {\color{black} assumption \eqref{0.sob} and} the analysis of the phase $\Theta$.
The equations for the amplitude and phase in the geometric
optics ansatz \eqref{0.ans}, with
\begin{equation}\label{4.anz}
\widetilde A(T,X,\eb)=e^{\widetilde r(T,X,\eb)}\,,\quad \widetilde \Theta(T,X,\eb)=\omega T+kX+\widetilde \phi(T,X,\eb)
\end{equation}
\color{black}
are
 \begin{equation}\label{eq-tilde}
\begin{array}{rcl}
  \displaystyle
  \partial_T\widetilde\phi + (k+\partial_X\widetilde\phi)^2-\gamma \re^{2\widetilde r}
  +\omega-\eb^2 (\partial_X^2\widetilde r + (\partial_X\widetilde r)^2) &=& 0\\[2mm]
  \partial_T\widetilde r + 2(k+\partial_X\widetilde\phi)\partial_X\widetilde r
  +\partial_X^2\widetilde\phi  &=& 0\,,
\end{array}
\end{equation}
 From the second
equation of (\ref{eq-tilde}) {\color{black} (and the fact that $\widetilde r,\partial_T\widetilde r\in H^s$)},
 we expect that $\DX^2\widetilde\phi$ should be
square integrable and $\DX\widetilde\phi$
 should be bounded, then {\color{black} differentiating the  first
   equation with respect to $X$,} we get
 that$\partial^2_{TX}{\color{black}\widetilde\phi}$ is square integrable,
 which together with the comparison with the basic
   wave train \eqref{wave-NLS} gives the natural assumption:
   \begin{equation}\label{eq-T}
 \DT\widetilde\phi,\DX\widetilde\phi\in H^s(\R).
   \end{equation}
Note that we do not assume that $\widetilde\phi(T)\in H^s(\R)$, only $\DX\widetilde\phi\in H^s(\R)$.
Actually, in general
 we do not have even that $\widetilde\phi(T)\in L^\infty(\R)$ and it may grow as $X\to\infty$ slightly slower
  than $\sqrt{|X|}$. With a slight abuse of notation, we denote
  $$
  \widetilde u(T,X,\eb):=\DX\widetilde\phi(T,X,\eb).
  $$
  We do not claim that assumptions \eqref{0.nosing} and \eqref{eq-T} are
  the most general for verifying
  the validity in Sobolev spaces {\color{black} (for example,
    the slightly more general assumptions, that
   $\widetilde u\in L^\infty$ and $\DX\widetilde u\in L^2$, also look acceptable, but are more
    difficult to implement since infinite energy solutions of
    the shallow water equations should then be considered)}, they are just convenient and look natural to us. So, from now on we
    assume that these conditions are always satisfied. In particular, they are satisfied for
     the initial data at $T=0$:
     \begin{equation}\label{eq.A}
     \widetilde r(0)=r(0):=\ln \widetilde A(0)\in H^s(\R),\ \
      \widetilde u(0)=u(0):=\DX\widetilde \phi(0)\in H^s(\R)
     \end{equation}
for some sufficiently large $s$.

With this setup, our strategy will be as follows.
We fix some initial data $(r(0),\phi(0))$ satisfying
 \eqref{eq-T} and \eqref{eq.A}
for sufficiently large $s$ and consider the corresponding solution $(r(T),u(T))$
satisfying the WMEs (SWEs) in (\ref{eq.ru}),
on  some interval $T\le T_0$. The local existence and uniqueness  of such a solution is proved in
 \S \ref{sec-swes} although we {\it do not assume} that $T_0$ is small.
 \par
 Then, formally Taylor expanding the geometric optics
 representation in $\eb$,
 in terms of the variables $(\widehat r,\widehat\phi)$,
 we get the recurrent linear equations for the corresponding
 Taylor coefficients.  In \S\ref{sec-approx-solns} it is proved
 that the obtained recurrent equations are uniquely solvable on {\it the same}
   time interval $T\in[0,T_0]$, so these expansions are well-defined.
   Truncating Taylor series
   at the $\eb^{2n}$-term and
   denoting the obtained functions by $\widehat A(T,X,\eb)$ and
   $\widehat \phi(T,X,\eb)$ respectively,
   we represent the approximate modulated solution (\ref{0.ans})
   of the NLS in the form (\ref{4.anz}).
   The amplitude and phase of this approximate solution (for $n$ fixed and
   finite) will satisfy the equations
\begin{equation}\label{eq2.3}
\begin{array}{rcl}
  \displaystyle
  \partial_T\widehat\phi + (k+\partial_X\widehat\phi)^2-\gamma \hat A^2
  +\omega-\eb^2\widehat A^{-1}\partial_X^2\widehat A  &=& \Res_\phi\\[2mm]
  \partial_T\widehat A + 2(k+\partial_X\widehat\phi)\partial_X\widehat A
  +\widehat A\partial_X^2\widehat\phi  &=& \Res_A\,,
\end{array}
\end{equation}
with the residual terms $\Res_\phi$ and $\Res_A$ satisfying
\begin{equation}\label{4.res}
\|\Res_A(T,\cdot)\|_{H^{s'}(\R)}+\|\Res_\phi(T,\cdot)\|_{H^{s'}(\R)}\le C\eb^{2(n+1)}\,,\quad T\le T_0\,,
\end{equation}
for some $s'=s'(n)<s$ along with
\begin{equation}\label{A0-phi0}
  \widehat r(T,X,0) = r(T,X) \qand \widehat\phi(T,X,0)=\phi(T,X)\,,
\end{equation}
see \S \ref{sec-approx-solns} for the details.
The results of \S \ref{sec-approx-solns}
then feed into \S \ref{sec-exactsolutions},
where estimates on the residual between $\widehat\Psi$ and the exact solution
$\widetilde\Psi$ are proved, which combine to complete the proof of
Theorem \ref{Th0.main}.

\section{Existence theory for the WMEs in Sobolev spaces}
\label{sec-swes}
\setcounter{equation}{0}

In this section, we treat the basic WMEs with $\gamma=-1$
from the perspective of the shallow water equations, or more generally as
a quasilinear hyperbolic system.  The starting point is
 \begin{equation}\label{6.WMEL}
 \begin{array}{rcl}
 \partial_T r &=& -\partial_X u- 2 (u+k)\partial_X r\,,\quad r\big|_{T=0}=r_0\\[2mm]
 \partial_T u &=& -\partial_X(u+k)^2-\partial_X(e^{2r})\,,\quad u\big|_{T=0}=u_0\,.
 \end{array}
 \end{equation}
 The arguments that we use are standard in the literature on quasilinear
 hyperbolic systems (see e.g., \cite{Majda}),
 so we will just sketch the proof.

 The system \eqref{6.WMEL} has an exact energy conservation law
with energy
 \begin{equation}\label{energy-def}
   E(r,u) =
   \(e^{2r}u^2+\frac12\(e^{2r}-1\)^2,1\)_{L^2}\quad\mbox{satisfying}\quad
   \frac{d\ }{dT} E(r,u) = 0 \,.
 \end{equation}
 We start with the analysis of the linearised non-homogeneous problem
 associated with \eqref{6.WMEL}
  \begin{equation}\label{6.linest}
\begin{cases}
  \partial_T R=-\partial_X U-2U\partial_X r-2(u+k)\partial_X R +H_{r}(T)\,,\quad
  R\big|_{T=0}=R_0,\\[2mm]
  \partial_T U =-2\partial_X((u+k)U)-2\partial_X(e^{2r}R)+H_{u}(T)\,,\quad
  U\big|_{T=0}=U_0,
\end{cases}
  \end{equation}
where $u(T)$ and $r(T)$ are given smooth functions satisfying
\begin{equation}\label{6.apr}
\|r(T)\|_{H^{s+2}}+\|u(T)\|_{H^{s+2}}\le C\,,\quad T\le T_0.
\end{equation}
We need the following result for this inhomogeneous system.  It will also
be used later for verifying the existence of local solutions for
the non-linear system and in the next section for constructing the
higher-order approximate solutions.
  \vspace{.15cm}

 \begin{proposition}\label{Prop6.main} Suppose that
    the smooth functions $r(T)$ and $u(T)$ satisfy \eqref{6.apr}.
 Then, for every
 $R_0,U_0\in H^s(\R)$ and all external forces $H_r,H_u\in
 L^\infty(0,T_0;H^s(\R))$, the linear system \eqref{6.linest}
  possesses a unique solution $(R,U)\in C(0,T_0;H^s(\R))$ and the following estimate holds:
  \begin{multline}\label{6.esst}
  \|R(T)\|_{H^s}^2+\|U(T)\|_{H^s}^2\le Ce^{KT}\(\|R(0)\|_{H^s}^2+\|U(0)\|_{H^s}^2\)+\\+C\int_0^Te^{K(T-s)}
  \(\|H_r(s)\|_{H^s}^2+\|H_u(s)\|_{H^s}^2\)\, ds\,,
  \end{multline}
  where the constants $C$ and $K$ depend only on Sobolev norms of $r$ and $u$.
  \end{proposition}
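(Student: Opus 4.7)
The plan is to recognise \eqref{6.linest} as a linear Friedrichs-symmetric hyperbolic system, derive \eqref{6.esst} by an energy argument at the $L^2$ level combined with Moser/Kato--Ponce commutator estimates at the $H^s$ level, and obtain existence by a standard vanishing-viscosity regularisation; uniqueness and continuous dependence then follow immediately from \eqref{6.esst} applied to the difference of two solutions.

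First I would rewrite \eqref{6.linest} in matrix form $\partial_T V+A\partial_X V+BV=H$ with $V=(R,U)^\top$, $H=(H_r,H_u)^\top$,
\begin{equation*}
A=\begin{pmatrix} 2(u+k) & 1 \\ 2e^{2r} & 2(u+k) \end{pmatrix},\qquad B=\begin{pmatrix} 0 & 2\partial_X r \\ 4e^{2r}\partial_X r & 2\partial_X u \end{pmatrix},
\end{equation*}
and check that the diagonal symmetriser $S:=\mathrm{diag}(2e^{2r},1)$ makes $SA$ symmetric and uniformly positive-definite under \eqref{6.apr}: the embedding $H^{s+2}\hookrightarrow W^{1,\infty}$ keeps $e^{2r}$ bounded away from $0$ and $\infty$, and all entries of $SA$, $SB$ and $\partial_T S$ bounded by a constant depending only on the norms in \eqref{6.apr}. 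The natural energy is then $\mathcal E(V):=(SV,V)_{L^2}$, which is equivalent to $\|V\|_{L^2}^2$.

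For the basic estimate I would pair the symmetrised equation with $V$ and integrate by parts in $X$ in the advective term, using symmetry of $SA$ to transfer $\partial_X$ onto the coefficient; this leaves only zeroth-order contributions controlled by $\|r\|_{W^{1,\infty}}+\|u\|_{W^{1,\infty}}+\|\partial_T r\|_{L^\infty}$ (the last arising from $\partial_T S$), and Gronwall produces \eqref{6.esst} with $s=0$. To upgrade to general $s$ I would apply $\Lambda^s:=(1-\partial_X^2)^{s/2}$ to the equation, set $V_s:=\Lambda^s V$, and repeat the symmetriser argument for $V_s$. The new commutators $[\Lambda^s,A]\partial_X V$ and $[\Lambda^s,B]V$ are controlled by the Kato--Ponce inequality combined with the Moser composition bound $\|e^{2r}\|_{H^{s+1}}\le\Phi(\|r\|_{H^{s+1}})$; under \eqref{6.apr} these are all finite, and a further application of Gronwall delivers \eqref{6.esst} for general $s$. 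Existence is then routine for a linear symmetric hyperbolic system: add a vanishing viscosity $-\delta\partial_X^2 V$ to each equation, solve the parabolic problem in $C([0,T_0];H^s)$ by standard semigroup/fixed-point methods, and pass to the limit $\delta\to 0^+$ using the $\delta$-uniform version of \eqref{6.esst} to extract a convergent subsequence; strong continuity in $H^s$ and uniqueness are obtained a posteriori from the energy identity.

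The main technical obstacle is the simultaneous control of the commutators and of the nonlinear composition $e^{2r}$ at the $H^s$ level, and this is what forces the two-derivative gap between assumption \eqref{6.apr} and the conclusion: one derivative is consumed by the Kato--Ponce/Moser estimates, and the second keeps $e^{2r}$ in the multiplier algebra at the top Sobolev level. This same buffer will be exploited in \S\ref{sec-approx-solns} when the linear scheme established here is iterated to construct the higher-order Whitham corrections that appear in the theorem.
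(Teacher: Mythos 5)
Your proposal is correct and follows essentially the same route as the paper: the paper's energy density $\widehat E_{\rm lin}=e^{2r}U^2+2e^{4r}R^2$ is exactly a symmetrised $L^2$ pairing (their symmetriser is $e^{2r}$ times yours), the $s=0$ case is closed by Gronwall, and higher $s$ is handled by differentiating in $X$ and recursing, which is the integer-order version of your $\Lambda^s$/Kato--Ponce step. One small correction: only $S$ need be positive definite while $SA$ need only be symmetric --- here $SA$ is in general \emph{not} positive definite (its determinant $8e^{2r}(u+k)^2-4e^{4r}$ can be negative), but nothing in your argument actually uses that claim.
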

\begin{proof} Introduce the energy density and flux
    \begin{equation}\label{E2-energy}
      \begin{array}{rcl}
        \widehat E_{{\rm lin}}(U,R) &=& \re^{2r}U^2 + 2 e^{4r}R^2\\[2mm]
        \widehat F_{{\rm lin}}(U,R) &=&
        2(u+k)\re^{2r}U^2
        +4(u+k)\re^{4r}R^2 + 4\re^{4r} U R\,.
      \end{array}
    \end{equation}
    Then a straightforward calculation, using (\ref{6.linest}), gives
    \[
    \partial_T\widehat E_{{\rm lin}} + \partial_X\widehat F_{{\rm lin}}
    =  - 4u_X\re^{2r}U^2 - 4u_X\re^{4r} R^2
    + 2\re^{2r}U H_u + 4\re^{4r}RH_r\,.
    \]
    Integrate over $\R$, and use vanishing of the flux $\widehat F_{{\rm lin}}$
    at infinity,
    \begin{equation}\label{7.good}
    \partial_TE_{{\rm lin}} =
    - \big(4u_X\re^{2r} , U^2 \big)_{L^2} - \big( 4u_X\re^{4r} ,  R^2 \big)_{L^2}
    + \big(2\re^{2r}U, H_u \big)_{L^2} + \big(4\re^{4r}R , H_r \big)_{L^2}\,,
    \end{equation}
    where
    \[
    E_{{\rm lin}} = \big( \widehat E_{{\rm lin}},1\big)_{L^2}\,.
    \]
    Now use the embedding of $H^1(\R)$ in the space of continuous functions
and the Cauchy-Schwarz inequality to arrive at
$$
\frac d{dT}E_{\rm lin}(T)\le K E_{\rm lin}(T)+C\(\|H_u(T)\|^2_{L^2}+\|H_r(T)\|^2_{L^2}\),
$$
where the constants $C$ and $K$ depend only on the norms of second derivatives of $u$ and $r$. The Gronwall
inequality now gives the desired estimate \eqref{6.esst} for $s=0$. For other values of $s$ it can be proved
analogously by differentiating \eqref{6.linest} in space sufficiently many times.
\par
{\color{black} Let us now discuss the case $s=1$ in more detail. Differentiating equations
\eqref{6.linest} in space, we see that the functions $\DX U$ and $\DX R$ satisfy the analogue of \eqref{6.linest}
with new right-hand sides
$$
H^1_r=\partial_X H_r-2U\partial_X^2r+2\partial_xu\partial_xR,\
\ H^1_u:=\partial_XH_u-2\DX(\DX u U)-4\DX(\DX re^{2r}R)\,.
$$
Applying the estimate \eqref{6.esst} with $s=0$  to the
equations for $\DX U$ and $\DX R$, we arrive at
\begin{multline}\label{6.esst1}
  \|\DX R(T)\|_{L^2}^2+\|\DX U(T)\|_{L^2}^2\le Ce^{KT}\(\|\DX R(0)\|_{L^2}^2+\|\DX U(0)\|_{L^2}^2\)+\\
  +C\int_0^Te^{K(T-s)}
  \(\|H_r^1(s)\|_{L^2}^2+\|H_u^1(s)\|_{L^2}^2\)\, ds.
  \end{multline}
Estimating the $L^2$-norms of $H^1_r$ and $H^1_u$ using \eqref{6.apr} with $s=1$ (which gives
in particular that $\partial_X^2u$ and $\partial_X^2r$ are bounded in $L^\infty$) and
 inserting the result into \eqref{6.esst1}, we get
\begin{multline}\label{6.esst2}
  \|R(T)\|_{H^1}^2+\|U(T)\|_{H^1}^2\le Ce^{KT}\(\|R(0)\|_{H^1}^2+\|U(0)\|_{H^1}^2\)+\\
  +C\int_0^Te^{K(T-s)}
  \(\|H_r(s)\|_{H^1}^2+\|H_u(s)\|_{H^1}^2\)\, ds+\\+C\int_0^Te^{K(T-s)}
  \(\|U(s)\|_{H^1}^2+\|R(s)\|_{H^1}^2\)\, ds,
  \end{multline}
where we have also used the estimate \eqref{6.esst} with $s=0$.
 Gronwall's inequality applied to \eqref{6.esst2} gives the desired
 estimate for $s=1$. The case $s>1$ can be done analogously.} Thus, the proposition is proved.
\end{proof}

We now turn to the non-linear case and state the main result of this section.
\vspace{.15cm}

\begin{theorem}\label{Th7.main} Let $s\ge2$ and let $u_0,r_0\in H^s(\R)$. Then there exists $T_0>0$ and
 $C$, depending only on the $H^s$-norms of the initial data, and a unique local solution $(r(T), u(T))$
 of problem \eqref{6.WMEL} on the time interval $T\in [0,T_0]$ satisfying the estimate
 \begin{equation}\label{7.est}
 \|r(T)\|_{H^s(\R)}+\|u(T)\|_{H^s(\R)}\le C\,,\quad \forall\ T\le T_0\,.
 \end{equation}
\end{theorem}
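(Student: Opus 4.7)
The plan is to run the classical scheme for quasilinear symmetric-hyperbolic systems: construct approximate solutions by Picard iteration, close a uniform $H^s$-bound on a short time interval using Proposition \ref{Prop6.main}, and extract a strong limit in a slightly weaker norm. Concretely, set $(r^{(0)}, u^{(0)}) \equiv (r_0, u_0)$ and, given $(r^{(n)}, u^{(n)})$, define $(r^{(n+1)}, u^{(n+1)})$ as the solution of the linear hyperbolic system obtained from \eqref{6.WMEL} by freezing the coefficients $u+k$ and $e^{2r}$ at the previous iterate, with the prescribed initial data $(r_0, u_0)$. Existence, uniqueness, and an $H^s$-estimate of the form \eqref{6.esst} with constant $K$ depending only on Sobolev norms of $(r^{(n)}, u^{(n)})$ follow from a direct adaptation of Proposition \ref{Prop6.main}: the symmetrizing energy density $\widehat E_{\mathrm{lin}}$ in \eqref{E2-energy} is built from $(r^{(n)}, u^{(n)})$, and the computation leading to \eqref{7.good} goes through verbatim. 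An induction argument then shows that, if $T_0$ is chosen small in a way depending only on $\|r_0\|_{H^s} + \|u_0\|_{H^s}$, the uniform bound
\[
\sup_{n,\ T \in [0, T_0]} \(\|r^{(n)}(T)\|_{H^s} + \|u^{(n)}(T)\|_{H^s}\) \le 2 C_0 \(\|r_0\|_{H^s} + \|u_0\|_{H^s}\)
\]
holds; this is the main closure step.

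The successive differences $(\delta r^{(n)}, \delta u^{(n)}) := (r^{(n+1)} - r^{(n)}, u^{(n+1)} - u^{(n)})$ satisfy the same type of linear system, but with a forcing that is linear in the previous-level differences $(\delta r^{(n-1)}, \delta u^{(n-1)})$ and whose $L^2$-norm is controlled using the uniform $H^s$-bound above. Applying \eqref{6.esst} at the level $s = 0$ yields a contraction of $C([0, T_0]; L^2)$-norms, possibly after shrinking $T_0$ further. Interpolating against the uniform $H^s$-bound upgrades this to convergence of $(r^{(n)}, u^{(n)})$ in $C([0, T_0]; H^{s'})$ for every $s' < s$, with the limit lying in $L^\infty(0, T_0; H^s)$. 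Passing to the limit inside the equations shows the limit solves \eqref{6.WMEL}, and strong continuity in $H^s$ is recovered from a standard weak-continuity argument combined with conservation of the energy \eqref{energy-def}. Uniqueness in the same regularity class follows by applying Proposition \ref{Prop6.main} directly to the difference of two solutions emanating from the same initial datum.

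The main technical obstacle is the mild ``loss of derivatives'' already visible in Proposition \ref{Prop6.main}, where the $H^s$-estimate on the solution uses $H^{s+2}$-control of the coefficients. I would handle this either by running the iteration at a sufficiently high regularity level where closure is immediate and then passing to the limit in the $H^s$-statement, or, more sharply, by invoking Kato--Ponce commutator estimates inside the proof of Proposition \ref{Prop6.main} to reduce the coefficient-regularity requirement to $H^{s+1}$. Since $s \ge 2$ and $H^{s+1}(\R) \hookrightarrow C^1_b(\R)$, the scheme closes in either case.
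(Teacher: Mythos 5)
Your route is genuinely different from the paper's. The paper does not iterate at all: it proves the a priori bound directly on the nonlinear system, combining the exact energy conservation \eqref{energy-def} with the linearized energy \eqref{E2-energy} applied to $(R,U)=(\partial_X^2 r,\partial_X^2 u)$ and a time-smallness bootstrap for $\|r\|_{L^\infty}$, and then declares existence and uniqueness straightforward once the estimate is in hand. The structural point the paper exploits is that after differentiating the nonlinear system twice, the dependence on the top-order derivatives is \emph{linear} and the coefficients are $(r,u)$ themselves, i.e.\ they sit at exactly the same $H^2$ level as the quantity being estimated, so no loss of derivatives ever arises. Your Picard scheme with frozen coefficients is the standard Majda construction and, unlike the paper's sketch, would actually supply the existence argument; the $L^2$ contraction for the differences and the interpolation/weak-continuity endgame are fine.

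However, your treatment of the loss of derivatives --- which you rightly flag as the main obstacle --- does not close as written. Proposition \ref{Prop6.main} demands $H^{s+2}$ coefficients for an $H^s$ bound, and neither proposed fix bridges that gap: there is no regularity level at which ``closure is immediate,'' because the loss is relative (an $H^m$ bound on the iterate requires the previous iterate in $H^{m+2}$, hence $H^{m+4}$ one step earlier, and so on), and weakening the requirement to $H^{s+1}$ still leaves you one derivative short of the uniform $H^s$ bound your induction provides --- the embedding $H^{s+1}\hookrightarrow C^1_b$ is beside the point. The repair exists but has to be named: your iteration solves the \emph{frozen-coefficient} system, not the full linearization \eqref{6.linest}, so the zeroth-order terms such as $2U\partial_X r$ and $4e^{2r}R\,\partial_X r$ --- whose coefficients carry derivatives of $(r,u)$ and are what force the extra regularity in \eqref{6.linest} --- are absent. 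For the frozen-coefficient system the commutator estimate $\|[\partial_X^s,a]\partial_X v\|_{L^2}\lesssim \|\partial_X a\|_{L^\infty}\|v\|_{H^s}+\|a\|_{H^s}\|\partial_X v\|_{L^\infty}$ closes the $H^s$ energy identity with coefficients controlled only in $H^s$ (using $s\ge2>3/2$), after which your induction and the remainder of the argument go through. Alternatively, you could adopt the paper's device of differentiating the equation $s$ times and exploiting the linearity in the top-order derivatives, which avoids the issue entirely.
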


\begin{proof} We want to use the energy conservation law \eqref{energy-def}
to control the $L^2$-norm of the solution, namely,
 utilize the following obvious estimates
 \begin{equation}\label{7.positive}
 e^{-4\|r\|_{L^\infty}}\big(\|r\|^2_{L^2}+\|u\|^2_{L^2}\big)\le E(r,u)\le
 Ce^{4\|r\|_{L^\infty}}\big(\|r\|^2_{L^2}+\|u\|^2_{L^2}\big)\,.
 \end{equation}
 However, at the level of $s=0$ the $L^\infty$-norm is not under control, and so
 the energy identity \eqref{energy-def} is not enough to control these $L^2$-norms. So, we need
 higher energy estimates which we will get by differentiating equations \eqref{6.WMEL} in $X$
 and using the linearized energies \eqref{E2-energy} $E_{lin}(R,U)$ for quantities
  $R=\partial_X^s r$ and $U=\partial_X^s u$, $s=1,2,\cdots$, which satisfy the same analogue of
   estimate \eqref{7.positive}. As we will see below, $s=1$ is also not enough to close the estimates, so
we prove for simplicity the result for $s=2$ only (the first value of $s$
where the estimate can be closed in an elementary way).
The case $s>2$ is analogous.
 \par
Firstly, to control the $L^\infty$-norm of $r$ we use the obvious estimate
\begin{multline}
\big| \|r(T)\|_{L^\infty}-\|r_0\|_{L^\infty} \big| \le T_0\|r_T\|_{L^\infty}\le\\\le
T_0\big(\|u_X\|_{L^\infty}+2\|u\|_{L^\infty}\|r_X\|_{L^\infty}\big)\le CT_0\big(1+\|(r,u)\|^2_{H^2}\big)\,,
\end{multline}
for $T\le T_0$.
Thus, if we assume that
\begin{equation}\label{7.smallt}
T_0\le \frac1{C(1+\|(r(T),u(T))\|^2_{H^2})}\,,\qand T\le T_0\,,
\end{equation}
we will have
\begin{equation}\label{7.r}
\Big|\|r(T)\|_{L^\infty}- \|r_0\|_{L^\infty}\Big|\le 1\,,\quad \forall\ T\le T_0\,.
\end{equation}
In turn, this will allow us to replace $\|r(T)\|_{L^\infty}$ by $\|r_0\|_{L^\infty}$ in equalities like \eqref{7.positive} and
 this norm  is under control.
 \par

 Secondly, differentiate equations \eqref{6.WMEL} twice in $X$
 and let $R:=\partial_X^2r$ and $U:=\partial_X^2 u$.
 These functions satisfy linear
 equations
 \[
 \begin{array}{rcl}
   R_T &=& -2(u+k) R_X - U_X   -4u_X R - 2r_X U \\[2mm]
   U_T &=& -2(u+k) U_X -2\re^{2r} R_X -6u_X U  - 12 r_X \re^{2r} R
   -8 r_X^3 \re^{2r}\,.
 \end{array}
 \]
 Crucial for us is that the dependence on the second derivatives is {\it linear}. Comparison with
  the equations (\ref{6.linest}) shows that $H_r$
 and $H_u$ are
 \[
H_r = -4u_X R \qand H_u = -4u_XU - 8r_X \re^{2r}R - 8 r_X^3 \re^{2r}\,.
\]
This together with the embedding of $H^1(\R)$ in the space of
  continuous functions allows us to write the estimate
\begin{equation}\label{7.ext}
\|H_r(T)\|_{L^2}+\|H_u(T)\|_{L^2}\le Q\big(\|(r(T),u(T))\|_{H^2}\big)\,,
\end{equation}
for some monotone increasing function $Q$. Using now identity \eqref{7.good}
 and the key estimate \eqref{7.ext}, we arrive at
\begin{equation}\label{7.mest}
\frac d{dT}\(E_{\rm lin}(r_{XX}(T),u_{XX}(T))+E(r,u)\)\le Q_1\(\|(r(T),u(T))\|_{H^2}\)\,,
\end{equation}
with $E_{{\rm lin}}(r_{XX},u_{XX}):=(e^{2r},u_{XX}^2)_{L^2}+2(e^{4r},r_{XX}^2)_{L^2}$.
Moreover, using \eqref{7.positive} together with its
analogue for $E_{\rm lin}$ and assumption \eqref{7.r}, we get
\begin{multline}\label{7.pos1}
C^{-1}e^{-4\|r_0\|_{L^\infty}}\|u(T),r(T)\|^2_{H^2}\le
E_{\rm lin}(r_{XX}(T),u_{XX}(T))+E(r(T),u(T))\le\\\le Ce^{4\|r_0\|_{L^\infty}}\|u(T),r(T)\|^2_{H^2}
\end{multline}
for all $T\le T_0$ if the extra assumption \eqref{7.smallt} is satisfied.
Integrating now \eqref{7.mest} in time $T\le T_0$ and using the last estimate, we arrive at
\begin{equation}\label{7.fin}
\sup_{T\le T_0}\|(r(T),u(T)\|^2_{H^2}\le Q_2\big(\|(r_0,u_0)\|_{H^2}\big) +
T_0Q_3\(\sup_{T\le T_0}\|(r(T),u(T))\|_{H^2}\)
\end{equation}
for some monotone increasing functions $Q_2$ and $Q_3$. This estimate allows us to fix the time interval
$T_0=T_0(\|(r_0,u_0)\|_{H^2})$
 to be small enough   in order to get the desired estimate
 $$
 \|(r(T),u(T))\|_{H^2}\le 2Q_2(\|(r_0,u_0)\|_{H^2})\,,\quad \forall\ T\le T_0\,,
 $$
which coincides with \eqref{7.est} for $s=2$. Finally, shrinking further the lifespan $T_0$ of the solution
if necessary, we satisfy assumption \eqref{7.smallt} as well. This finishes the derivation of \eqref{7.est}
 for $s=2$.
 Existence and uniqueness of the solution
is straightforward when the proper a priori estimate is verified. Thus, the theorem is proved.
\end{proof}

\section{Approximate solutions of the perturbed WMEs}
\label{sec-approx-solns}
\setcounter{equation}{0}

In this section we study the finite-order Taylor expansion
in $\eb$ of $(\hat A,\hat \phi)$ in (\ref{0.ans}).
Substitution of (\ref{0.ans}) into (\ref{nls-1})
gives the exact equations
\begin{equation}\label{eq2.31}
\begin{array}{rcl}
  \displaystyle
  \partial_T\widetilde\phi + (k+\partial_X\widetilde\phi)^2-\gamma  \widetilde A^2
  +\omega-\eb^2\widetilde A^{-1}\partial_X^2\widetilde A  &=&0\\[2mm]
  \partial_T\widetilde A + 2(k+\partial_X\widetilde\phi)\partial_X\widetilde A
  +A\partial_X^2\widetilde\phi  &=&0\,.
\end{array}
\end{equation}
As before, we assume that $\gamma=\omega+k^2$ (the general case is reduced to this particular one by scaling).
These equations are expressed in terms of $\widetilde\phi$ as they
will be needed below for estimates on
the phase.  However, the estimates on the Taylor expansions will be
carried out in $(\widetilde r,\widetilde u)$ variables.  We transform the
equations in two steps.  First introduce the new variable $\widetilde r$
defined by $\widetilde A=e^{\widetilde r}$ to eliminate the singularity
at $\widetilde A=0$. Then \eqref{eq2.31} reads
\begin{equation}\label{eq2.32}
\begin{array}{rcl}
  \displaystyle
  \partial_T\widetilde\phi + (k+\partial_X\widetilde\phi)^2-\gamma  e^{2\widetilde r}
  +\gamma-k^2-\eb^2\partial_X^2\widetilde r-\eb^2(\partial_X\widetilde r)^2  &=&0\\[2mm]
  \partial_T\widetilde r + 2(k+\partial_X\widetilde \phi)\partial_X\widetilde r
  +\partial_X^2\widetilde\phi  &=&0\,.
\end{array}
\end{equation}
Differentiating the first equation in $X$ and inserting
$\widetilde u=\DX\widetilde \phi$, we end up with
\begin{equation}\label{eq2.33}
\begin{array}{rcl}
  \displaystyle
  \partial_T\widetilde u + \DX(k+\widetilde u)^2-\gamma  \DX e^{2\widetilde r}
  -\eb^2\partial_X^3\widetilde r-\eb^2\DX(\partial_X\widetilde r)^2  &=&0\\[2mm]
  \partial_T\widetilde r + 2(k+\widetilde u)\partial_X \widetilde r
  +\partial_X\widetilde u  &=&0\,,
\end{array}
\end{equation}
which is a perturbed version of SWEs \eqref{6.WMEL} studied earlier.
These equations (\ref{eq2.31})-(\ref{eq2.33}) are exact.  However, at this stage
we will prove the existence of $\eb^{2n}$-approximations $(\widehat r,\widehat\phi)$
to the exact solution $(\widetilde r,\widetilde\phi)$ only.
\par

We construct the approximate solution $(\hat r,\hat u)$ for equations \eqref{eq2.33}
and then lift the result to the initial equations \eqref{eq2.31}
to obtain phase information.
The approximate solution $(\hat r,\hat u)$ will be constructed in
the class of square integrable
  functions (in agreement with Theorem \ref{Th7.main}). However,
  the functions $\hat A$ and $\hat\phi$ are {\it not square integrable} in general, but will
  satisfy the following integrability properties:
  \begin{equation}
  \hat A-1, \DT\hat A,\DX\hat A,\DT\hat\phi,\DX\hat\phi\in H^s(\R)
  \end{equation}
for some $s>0$.
\par
Assume
 that the smooth local solution $(r(T),u(T))$ of the limit system \eqref{6.WMEL} is
  given and satisfies \eqref{7.est} for some $s\ge2$. The existence of such a solution
   is confirmed in Theorem \ref{Th7.main} for some small $T_0$ depending on the initial data,
    but in this section the lifespan of this given solution is not assumed to be small.
\par
Expand the approximate solution $(\hat r,\hat u)$
into a Taylor series in $\eb$:
\begin{equation}
\begin{array}{rcl}\label{taylor}
  \hat r(T,\eps) &:=& r(T)+\eb^2 r_1(T)+\eb^4 r_2(T)+\cdots+\eb^{2n}r_{n}(T),\\[2mm]
 \hat u(T,\eps) &:=& u(T)+\eb^2 u_1(T)+\eb^4 u_2(T)+\cdots+\eb^{2n}u_{n}(T)\,.
\end{array}
\end{equation}
Then, inserting these expansions into \eqref{eq2.33} and equating terms
with equal powers of $\eb$, we get the
recursive equations
\begin{equation}\label{6.lin}
\begin{cases}
  \partial_T r_l=-\partial_X u_l-2(u+k)\partial_X r_l-2\partial_X r u_l+H_{l-1,r}(T)\,,
  \quad r_l\big|_{T=0}=0,\\
  \partial_T u_l =-2\partial_X((u+k)u_l)-2\partial_X(e^{2r}r_l)+H_{l-1,u}(T)\,,
  \quad u_l\big|_{T=0}=0\,,
\end{cases}
\end{equation}
where $l\ge1$ and $(r_0,u_0)=(r,u)$.
The smooth functions $H_{l-1,r}$ and $H_{l-1,u}$ depend only on
$r,r_1,\cdots r_{l-1}$ and $u,u_1,\cdots, u_{l-1}$
and their derivatives up to order $3$, and so the sequence of linear equations \eqref{6.lin}
can be solved recursively. Moreover, the residual
$\widehat \Res_{n}(T)$ satisfies
$$
\widehat \Res_{n,\star}=\eb^{2(n+1)}R_{n,\star}(\eb, D u,Du_1,\cdots, Du_n, Dr,Dr_1,\cdots, D r_n)
$$
for some smooth functions $R_{n,\star}$.
Here ``$\star$'' represents "$r$" or "$u$" and $Du$
means the collection of all
$X$-derivatives of $u$ up to order  $3$.
For instance,
\begin{equation}
H_{0,r}=0,\ \ H_{0,u}=\partial_X^3r+\DX(\DX r)^2
\end{equation}
and
\begin{multline}
R_{1,r}=2u_1\DX r_1,\ \ R_{1,u}=2u_1\DX u_1-\gamma\eb^{-4}\DX(e^{2r}(e^{2\eb^2 r_1}-1-2\eb^2 r_1))-\\-
\DX^3r_1-2\DX(\DX r\DX r_1)-\eb^2\DX(\DX r_1)^2.
\end{multline}
Thus, in order to get the desired result for the approximate solutions $(\hat r,\hat u)$, we just need
to get good estimates for the $H^s$-norms of the solutions for
the linear problems \eqref{6.lin}. This is possible due to Proposition \ref{Prop6.main}
and gives the following result.
 \vspace{.15cm}
\begin{proposition}\label{Prop7.RUa} Let $s\ge0$ and $n\in\Bbb N$ be fixed and let $(r(T),u(T))$,
$t\le T_0$ be a solution of the limit problem \eqref{6.WMEL} satisfying
\begin{equation}\label{in-good}
\|r(t)\|_{H^{s+3n+5}(\R)}+\|u(T)\|_{H^{s+3n+5}(\R)}\le C.
\end{equation}
Then there exists an approximate solution $(\widehat r,\widehat u)$  which satisfies
 \begin{equation}\label{5.WUPa}
 \begin{cases}
 \partial_T\hat r=-\partial_X \hat u-2(\hat u+k)\partial_X\hat r+\widehat\Res_{n,r}(T),\\[2mm]
 \partial_T\hat u=\eb^2\partial_X^3\hat r-\partial_X((\hat u+k))^2+\eb^2\partial_X(\partial_X \hat r)^2+ \gamma\partial_X(e^{2\hat r})+ \widehat\Res_{n,u}(T)\,,
 \end{cases}
 \end{equation}
 defined on the same time interval $T\in[0,T_0]$ such that
\begin{equation}
\|\hat r(T)\|_{H^{s+5}}+\|\hat u(T)\|_{H^{s+5}}+\|\DT\hat r(T)\|_{H^{s+2}} \le C_1\,,
\end{equation}
and
\begin{equation}
\|\widehat \Res_{n,r}(T)\|_{H^{s+2}}+\|\widehat \Res_{n,u}(T)\|_{H^{s+2}}\le C_1\eb^{2(n+1)}\,,
\end{equation}
for all $T\in[0,T_0]$, and for some constant $C_1$ which
is independent of $\eb$.
\end{proposition}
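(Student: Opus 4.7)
The plan is to construct the Taylor coefficients $(r_l,u_l)$ for $l=1,\dots,n$ recursively by applying Proposition~\ref{Prop6.main} to the linearised system \eqref{6.lin}, then assemble $(\widehat r,\widehat u)$ as the truncated series \eqref{taylor} and compute the residual by direct substitution into \eqref{5.WUPa}. The key observation is that \eqref{6.lin} is exactly the inhomogeneous linearised SWE system \eqref{6.linest}, driven by the nominal background $(r,u)$, with the inhomogeneities $(H_{l-1,r},H_{l-1,u})$ depending only on the already-constructed iterates $(r_j,u_j)$, $j<l$.

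I would proceed by induction on $l$. Set $(r_0,u_0):=(r,u)$, which by hypothesis lies in $H^{s+3n+5}(\R)$ uniformly on $[0,T_0]$. Suppose inductively that for all $j<l$ the functions $(r_j,u_j)$ are constructed on $[0,T_0]$ and satisfy a uniform $H^{s_j}$ bound, with $s_j := s+3(n-j)+5$. Since $(H_{l-1,r},H_{l-1,u})$ is a smooth nonlinear function (polynomial plus terms arising from the Taylor expansion of $e^{2\widehat r}$) of the $(r_j,u_j)$, $j<l$, and their $X$-derivatives of order $\leq 3$, Moser estimates in Sobolev algebras give
\[
\|H_{l-1,r}(T)\|_{H^{s_l}} + \|H_{l-1,u}(T)\|_{H^{s_l}} \le C,\qquad T\in[0,T_0],
\]
with $s_l = s_{l-1}-3$. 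Proposition~\ref{Prop6.main} then yields a unique solution $(r_l,u_l)\in C([0,T_0];H^{s_l})$ with zero initial data and a corresponding uniform bound. After $n$ such steps the working regularity drops from $s_0 = s+3n+5$ down to $s_n = s+5$, so all constructed coefficients are bounded in $H^{s+5}$ at least, and the estimate for $\DT\widehat r$ in $H^{s+2}$ follows from the first equation of \eqref{6.lin}.

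Once $(\widehat r,\widehat u)$ is formed, I would substitute into \eqref{5.WUPa} and identify the residual. By construction of the recurrence, all coefficients of powers $\eb^0,\eb^2,\dots,\eb^{2n}$ in the formal expansion of the left-hand side of \eqref{5.WUPa} cancel. What remains consists of three types of terms: Taylor tails of the quadratic polynomial $\partial_X(\widehat u+k)^2$ and $(\widehat u+k)\partial_X\widehat r$, tails of the entire function $e^{2\widehat r} = e^{2r}\cdot e^{2(\widehat r-r)}$ expanded through order $\eb^{2n}$, and the explicitly $\eb^2$-weighted dispersive terms $\eb^2\partial_X^3\widehat r$ and $\eb^2\partial_X(\partial_X\widehat r)^2$ whose contribution beyond order $\eb^{2n}$ is likewise of order $\eb^{2(n+1)}$. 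Each of these remainders factors explicitly as $\eb^{2(n+1)}$ times a smooth function of $\eb$ and of the $(r_j,u_j)$ and their derivatives up to order $3$. Using the $H^{s+5}$ bounds for the iterates and the Sobolev algebra property (valid since $s+2\ge 2 > 1/2$), together with the fact that $\widehat r\in L^\infty$ so that $e^{2\widehat r}$ is smooth and bounded in $H^{s+2}$, one obtains the claimed estimate $\|\widehat\Res_{n,\ast}(T)\|_{H^{s+2}}\le C_1\eb^{2(n+1)}$.

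The main obstacle is the bookkeeping: one must choose the regularity loss at each iteration so that, after $n$ recursive applications of Proposition~\ref{Prop6.main}, the final functions still possess enough Sobolev regularity to allow the residual to be estimated in $H^{s+2}$ (which requires differentiating $\widehat r$ three times for the dispersive term $\eb^2\partial_X^3\widehat r$). The budget $s+3n+5$ for the data is precisely tailored to this loss: $3$ derivatives per level for the forcing, plus $3$ more at the final stage to absorb the dispersive correction and to keep $s+2$ derivatives in the residual. A secondary technical point is controlling the Taylor remainder of $e^{2\widehat r}$; this is handled by writing $e^{2\widehat r} = e^{2r}\,e^{2\eb^2(r_1+\eb^2 r_2+\cdots)}$, Taylor-expanding the second factor as a polynomial in $\eb^2$ plus integral remainder of order $\eb^{2(n+1)}$, and estimating each piece separately in $H^{s+2}$ using Moser estimates.
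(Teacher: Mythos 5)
Your proposal is correct and follows essentially the same route as the paper: the paper also constructs the Taylor coefficients by recursively solving the linear systems \eqref{6.lin} via Proposition \ref{Prop6.main}, notes that each forcing $H_{l-1,\star}$ involves at most three derivatives of the previously constructed iterates (whence the budget $s+3n+5$), and identifies the residual as $\eb^{2(n+1)}$ times a smooth function of $\eb$ and the iterates and their derivatives up to order three, exactly as in your Taylor-tail analysis of $e^{2\hat r}$ and the $\eb^2$-weighted dispersive terms. Your derivative bookkeeping $s_j=s+3(n-j)+5$ makes explicit what the paper leaves implicit, but the argument is the same.
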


\subsection{Lifting to the phase equation}
\label{subsec-phase}

Let us now lift the approximate solution $(\hat r,\hat u)$
to the phase variables in equation \eqref{eq2.31}. The
principal difficulty here is defining the approximate phase $\hat\phi$
(going from \eqref{eq2.33} to \eqref{eq2.32}), as the lifting from equations
\eqref{eq2.32} to \eqref{eq2.31} is immediate.

For exact solutions we have the relation
$u=\DX\phi$, so the obvious way to define the approximate phase would
be to integrate over the slow space variable:
 $$
 \hat\phi(T,X)=\int_0^X\hat u(T,s)\,ds+\bar \phi(T)\,;
 $$
 this strategy is used in \cite{ds09}.  However, in this case it is not
 clear how to determine the function $\bar\phi(T)$,
and so a complete picture is lacking.
We propose an alternative way, namely, to use the first
 equation of \eqref{eq2.32} to \emph{solve} for the phase,
\begin{equation}\label{phi-def}
\partial_T\hat\phi:=- (k+\hat u)^2+\gamma  e^{2\hat r}
  -\gamma+k^2+\eb^2\partial_X^2\hat r+\eb^2(\partial_X\hat r)^2,\ \ \hat\phi|_{T=0}=\phi_0\,.
\end{equation}
Then, since $(\hat r,\hat u)$ is already defined, the phase $\hat\phi(T)$ will
 be restored in a unique way. From this formula we see that indeed $\DT\hat\phi(T)\in H^{s+3}(\R)$
  for $T\le T_0$. Moreover, comparing this definition with the second equation of \eqref{5.WUPa},
  we see that
$$
\DT(\DX\hat\phi-\hat u)=-\widehat\Res_{n,u}(T),\ \ \DX\hat\phi\big|_{T=0}-\hat u\big|_{T=0}=0
$$
and therefore
$$
\DX\hat\phi(T)=\hat u(T)-\int_0^T\widehat\Res_{n,u}(s)\,ds.
$$
Inserting this identity into the right-hand side of equation \eqref{phi-def} and to the first
equation of \eqref{5.WUPa}, we conclude that the pair $(\hat r,\hat\phi)$ solves
\begin{equation}\label{eq2.34}
\begin{array}{rcl}
  \displaystyle
  \partial_T\hat\phi + (k+\partial_X\hat\phi)^2-\gamma  e^{2\hat r}
  +\gamma-k^2-\eb^2\partial_X^2\hat r-\eb^2(\partial_X\hat r)^2  &=&\Res_{n,\phi}(T)\\[2mm]
  \partial_T\hat r + 2(k+\partial_X\hat\phi)\partial_X \hat r
  +\partial_X^2\hat\phi  &=&\Res_{n,r}(T)\,,
\end{array}
\end{equation}
where
\begin{equation}\label{eq.res-phi}
\Res_{n,\phi}(T)=-\(2k+2\hat u(T)-\int_0^T\widehat\Res_{n,u}(s)\,ds\)\int_0^T\widehat\Res_{n,u}(s)\,ds
\end{equation}
and
\begin{equation}\label{eq.res-r}
\Res_{n,r}(T)=\widehat\Res_{n,r}(T)-\int_0^T\DX\widehat\Res_{n,u}(s)\,ds-
2\DX\hat r\int_0^T\widehat\Res_{n,u}(s)\,ds.
\end{equation}
Thus, we have proved the following result which can be considered as the main result of this section.
\vspace{.15cm}

\begin{theorem}\label{Th6.main} Let the assumptions of Proposition \ref{Prop7.RUa} hold and suppose $(\hat r, \hat u)$ is the
approximate solution constructed there. Then the function $\hat A:=e^{\hat r}$ and $\hat\phi$
 defined by \eqref{phi-def} satisfy
\begin{equation}\label{eq2.35}
\begin{array}{rcl}
  \displaystyle
  \partial_T\widehat\phi + (k+\partial_X\widehat\phi)^2-\gamma  \widehat A^2
  +\gamma-k^2 -\eb^2\widehat A^{-1}\partial_X^2\widehat A  &=&\Res_{n,\phi}\\[2mm]
  \partial_T\widehat A + 2(k+\partial_X\widehat \phi)\partial_X \widehat A
  +A\partial_X^2\widehat \phi  &=&\Res_{n,A}\,,
\end{array}
\end{equation}
where the residuals $\Res_{n,\phi}$ and $\Res_{n,r}$ are defined by \eqref{eq.res-phi}
 and \eqref{eq.res-r} respectively, the residual $\Res_{n,A}:=e^{\hat r}\Res_{n,r}$ and
  the following estimate holds:
\begin{equation}\label{4.rres}
 \|\Res_{n,A}(T)\|_{H^{s+1}}+\|\Res_{n,\phi}(T)\|_{H^{s+1}}\le C\eb^{2(n+1)}\,,
\end{equation}
 for $T\le T_0$. The approximate solution $(\hat A,\hat \phi)=(e^{\widehat r},\widehat\phi)$
 also satisfies the estimate
\begin{equation}\label{4.phi}
\|\widehat r(T)\|_{H^{s+5}}+\|\DX\hat\phi(T)\|_{H^{s+2}}+
\|\DT\hat r(T)\|_{H^{s+2}} \le C
\end{equation}
for $T\le T_0$ and some constant $C$ which is independent of $\eb$.
\end{theorem}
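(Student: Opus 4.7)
The strategy is to promote the approximate solution $(\widehat r,\widehat u)$ of Proposition~\ref{Prop7.RUa} to an approximate solution $(\widehat A,\widehat\phi)$ of the amplitude-phase system \eqref{eq2.34} by defining the phase directly through \eqref{phi-def}. By construction of \eqref{phi-def}, the first equation of \eqref{eq2.34} would hold exactly if $\DX\widehat\phi$ coincided with $\widehat u$, so the entire analysis reduces to controlling the discrepancy $\DX\widehat\phi-\widehat u$ and propagating its smallness through the nonlinearities.

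Differentiating \eqref{phi-def} in $X$ and subtracting the second equation of \eqref{5.WUPa} gives the key identity $\DT(\DX\widehat\phi-\widehat u)=-\widehat\Res_{n,u}$, hence (since $\DX\widehat\phi-\widehat u$ vanishes at $T=0$ by the initial conditions)
\[
\DX\widehat\phi(T)-\widehat u(T)=-\int_0^T\widehat\Res_{n,u}(s)\,ds.
\]
Substituting this into $(k+\DX\widehat\phi)^2=(k+\widehat u)^2+(\DX\widehat\phi-\widehat u)(2k+\widehat u+\DX\widehat\phi)$ in the first equation of \eqref{eq2.34} and using \eqref{phi-def} to cancel the leading terms produces exactly $\Res_{n,\phi}$ as in \eqref{eq.res-phi}. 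For the second equation of \eqref{eq2.34}, I substitute $\DX\widehat\phi=\widehat u-\int_0^T\widehat\Res_{n,u}$ into both $2(k+\DX\widehat\phi)\DX\widehat r$ and $\DX^2\widehat\phi=\DX\widehat u-\int_0^T\DX\widehat\Res_{n,u}$, then use the first equation of \eqref{5.WUPa}; the leftover terms reassemble into \eqref{eq.res-r}. Converting to the amplitude form $\widehat A=e^{\widehat r}$ is algebraic: the chain rule gives $\widehat A^{-1}\DX^2\widehat A=\DX^2\widehat r+(\DX\widehat r)^2$, so \eqref{eq2.34} is equivalent to \eqref{eq2.35} with $\Res_{n,A}:=e^{\widehat r}\Res_{n,r}$, and this delivers the claimed system.

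The remaining task is to estimate the residuals in $H^{s+1}$. Proposition~\ref{Prop7.RUa} supplies $\|\widehat\Res_{n,r}(T)\|_{H^{s+2}}+\|\widehat\Res_{n,u}(T)\|_{H^{s+2}}\le C\eb^{2(n+1)}$ on $[0,T_0]$, so $\int_0^T\widehat\Res_{n,u}(s)\,ds$ is of order $\eb^{2(n+1)}$ in $H^{s+2}$ and $\int_0^T\DX\widehat\Res_{n,u}(s)\,ds$ is of the same order in $H^{s+1}$; this single-derivative loss accounts precisely for the drop from $H^{s+2}$ in Proposition~\ref{Prop7.RUa} to $H^{s+1}$ in \eqref{4.rres}. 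Using the algebra property of $H^{s+1}(\R)$ (with the $L^\infty$ embedding for the low-index factors, valid for $s\ge 1$) and the uniform $H^{s+5}$-bounds on $\widehat r,\widehat u,\DX\widehat r$ from Proposition~\ref{Prop7.RUa}, the products $\widehat u\cdot\int_0^T\widehat\Res_{n,u}$ and $\DX\widehat r\cdot\int_0^T\widehat\Res_{n,u}$ appearing in \eqref{eq.res-phi}--\eqref{eq.res-r} retain the $\eb^{2(n+1)}$-smallness in $H^{s+1}$. Multiplying $\Res_{n,r}$ by $e^{\widehat r}$ (bounded together with $e^{\widehat r}-1\in H^{s+5}$ via Taylor expansion) yields the same bound for $\Res_{n,A}$, which proves \eqref{4.rres}. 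The estimate \eqref{4.phi} then follows immediately from $\DX\widehat\phi=\widehat u-\int_0^T\widehat\Res_{n,u}$ combined with the uniform bounds of Proposition~\ref{Prop7.RUa}. No analytic obstacle arises beyond careful bookkeeping of derivative counts, since all of the quantitative work has been done in Proposition~\ref{Prop7.RUa} and the passage from $(\widehat r,\widehat u)$ to $(\widehat A,\widehat\phi)$ is a purely algebraic rearrangement controlled by the Sobolev algebra structure.
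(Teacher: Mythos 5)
Your proposal is correct and follows essentially the same route as the paper: the phase is defined by integrating \eqref{phi-def} in time, the identity $\DX\widehat\phi-\widehat u=-\int_0^T\widehat\Res_{n,u}(s)\,ds$ is derived by comparing with the second equation of \eqref{5.WUPa}, and back-substitution yields the residuals \eqref{eq.res-phi}--\eqref{eq.res-r} with the estimates inherited from Proposition \ref{Prop7.RUa}. Your explicit accounting of the one-derivative loss (from $\int_0^T\DX\widehat\Res_{n,u}$) explaining the drop from $H^{s+2}$ to $H^{s+1}$ is a detail the paper leaves implicit, but the argument is the same.
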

\vspace{.15cm}

\begin{remark} Restoring the phase $\hat \phi$ via equation \eqref{phi-def}
  gives more detailed information about the phase then one might expect.
  First of all, with this approach, the phase
  is restored in a unique way which, in turn, leads to the construction
  of an approximate solution of
the initial NLS also in a unique way and this is crucial for our proof of validity, see next section below.
\par
 In addition, it follows from equation \eqref{phi-def} that $\DT\hat\phi\in L^2$ and, therefore,
 $\hat\phi(T)-\hat\phi(0)\in L^2$ which gives a useful extra information about the approximate phase.
 \par
 Finally, our approach allows to verify that the difference $\widetilde \phi-\widehat\phi$ between the exact
  and approximate phases is in $L^2$ and is controlled by the appropriate power of $\eb$ (although the details of such
  estimates are outside the scope of this paper).
\end{remark}

\section{Exact solutions in Sobolev spaces}
  \label{sec-exactsolutions}
  \setcounter{equation}{0}

  In this section the properties of the exact
  solution (\ref{exact-solution-nls}) needed to complete the proof of
  Theorem \ref{Th0.main} are established.  It is
  assumed that the two estimates (\ref{4.rres})
  and (\ref{4.phi}) hold with $s=0$.

  Express the exact solution in the form
\begin{equation}\label{Psi-A-decomposition}
  \widetilde \Psi(T,X,\eps)=\widehat\Psi(T,X,\eps)V(T,X,\eps)\,,\quad
  V = 1+ W_1+\ri W_2\,,
\end{equation}
where $\widehat\Psi$ is the approximation of (\ref{0.ans})
to $n-$th order obtained
in \S\ref{sec-approx-solns} and
$W_1+\ri W_2$ is an unknown complex-valued function. Substitution of
this expression into the cubic NLS equation (\ref{nls-1}) gives
the following equation:
{\color{black}
$$
\ri\partial_T V+\eb\partial_X^2 V+2\eb\frac{\DX \widehat\Psi}{\widehat\Psi}\DX V+
\frac{\ri\partial_T \widehat\Psi+
\eb\partial_X^2\widehat\Psi}{\widehat \Psi}V+\gamma\eb^{-1}|\widehat\Psi|^2|V|^2V=0.
$$
Recall that, according to \eqref{0.ans},
  \[
  \widehat  \Psi(T,X,\eps) = \re^{\hat r}
  \re^{\ri\eb^{-1}\widehat\Theta}\,, \ \widehat\Theta:=\omega T+kX+\hat\phi\,,
  \]
  where $W_1,W_2,\widehat\Theta,\widehat r$, and $\widehat \phi$ are all functions
  of $(T,X,\eps)$, we obtain that
  $$
\frac{\ri\partial_T \widehat\Psi+
\eb\partial_X^2\widehat\Psi}{\widehat \Psi}V=
i\hat A^{-1}\(\DT \hat A+\hat A\partial_X^2\hat\Theta+2\DX \hat A\DX\hat\Theta\)-
\eb^{-1}\ri\(\DT\hat\Theta+(\DX\hat\Theta)^2-\hat A^{-1}\partial_X^2 \hat A\).
$$
Using now that the functions $\hat A$ and $\hat\phi$ satisfy equation \eqref{eq2.35} together with the dispersion relation
$\gamma=\omega+k^2$, we arrive at the following equation for $V$:}
\begin{equation}\label{4.main}
  \begin{array}{rcl}
&&
\ri\partial_T V+\eb\partial_X^2 V+2\ri (k+\partial_X\hat\phi)\partial_X V
+2\eb \partial_X\hat r\partial_X V+\\[2mm]
&&\hspace{2.5cm} +
\gamma e^{2\hat r}\eb^{-1}V(|V|^2-1) +(\ri e^{-\hat r}\Res_A-\eb^{-1}\Res_\phi)V=0\,.
  \end{array}
  \end{equation}
Separating real and imaginary parts, \eqref{4.main} splits into
  an equation for $W_1$ and $W_2$,
\begin{equation}\label{4.real}
\begin{cases}
\partial_T W_1=-\eb(\partial_X^2+2\partial_X\hat r\partial_X)W_2-2(k+\partial_X\hat
\phi)\partial_X W_1+\\[2mm]\hspace{4.0cm}
+\Ree\{\Cal H(T)(1+W)\}+\eb^{-1}F_1(W)\\[2mm]
\partial_T W_2=\eb(\partial_X^2+2\partial_X\hat r\partial_X+2\eb^{-2}\gamma e^{2\hat r})W_1-2(k+\partial_X\hat
\phi)\partial_X W_2+\\[2mm]\hspace{4.0cm} +\Imm\{\Cal H(T)(1+W)\}+\eb^{-1}F_2(W),
\end{cases}
\end{equation}
where the non-linearity $F$ is a polynomial containing
only quadratic and cubic terms and
\begin{equation}\label{H-Res-def}
 \Cal H:=e^{-\hat r}\Res_A+\ri \eb^{-1}\Res_\phi.
\end{equation}
Thus, in these new coordinates the functions $W_1$ and $W_2$ are responsible for the deviation of the exact
solution $\widetilde\Psi(T,X)$ from the approximate solution $\widehat \Psi(T,X)$ constructed via the higher order WMT,
so our task here is to verify that these functions are in a sense small.
\par
  In order to solve the semilinear
 equation (\ref{4.real}) on the interval $T\le T_0$ it is enough to get good estimates in the $H^s$-norm
  for the linearized non-homogeneous equation:
\begin{equation}\label{4.lin}
\begin{cases}
\partial_T W_1=-\eb(\partial_X^2+2\partial_X\hat r\partial_X)W_2-2(k+\partial_X\hat
\phi)\partial_X W_1+H_1(T)\\[2mm]
\partial_T W_2=\eb(\partial_X^2+2\partial_X\hat r\partial_X+2\eb^{-2}\gamma e^{2\hat r})W_1-2(k+\partial_X\hat\phi)\partial_X W_2+H_2(T),
\end{cases}
\end{equation}
where $H_i(T)$ are given external forces. This is done in the following theorem.
\vspace{.15cm}

\begin{theorem}\label{Th4.lin} Let the above conditions on $\hat r$ and $\hat\phi$ hold and
 let $\gamma=-1$. Then,
for every $W(0)\in H^1(\R)$, {\color{black} there exists a unique   solution
$W\in C([0,T_0],H^1)$ for any $T_0\in\R_+$  of
 \eqref{4.lin} and this solution} satisfies the following estimate:
 \begin{multline}\label{4.linest}
   \|\partial_X W(T)\|_{L^2}^2+\eb^{-2}\|W_1(T)\|_{L^2}^2+\|W_2(T)\|^2_{L^2}\le\\[2mm]\le
    C(\|\partial_X W(0)\|_{L^2}^2+\eb^{-2}\|W_1(0)\|^2_{L^2}+\|W_2(0)\|^2_{L^2})e^{C T}+\\[2mm]
   +C\int_0^Te^{C(T-\tau)}\Big(\|\partial_X H(\tau)\|_{L^2}^2+\eb^{-2}\|H_1(\tau)\|^2_{L^2}+\|H_2(\tau)\|^2_{L^2}\Big)\,d\tau\,,
 \end{multline}
 where the constant $C$ is independent of $\eb$.
\end{theorem}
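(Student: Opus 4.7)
The plan is to derive \eqref{4.linest} from a single energy inequality, and then to obtain existence and uniqueness in $C([0,T_0],H^1)$ by a standard regularisation argument applied to the linear system with smooth coefficients. Set $\lambda := e^{2\hat r}$ and $P := \DX^2 + 2(\DX\hat r)\DX = \lambda^{-1}\DX(\lambda\,\DX\cdot)$, so that $P$ is formally self-adjoint in the weighted space $L^2(\lambda\,dX)$. With $\gamma=-1$, system \eqref{4.lin} becomes
\begin{equation*}
\DT W_1 = -\eb P W_2 - 2(k+\DX\hat\phi)\DX W_1 + H_1,\qquad
\DT W_2 = \eb P W_1 - 2\eb^{-1}\lambda W_1 - 2(k+\DX\hat\phi)\DX W_2 + H_2,
\end{equation*}
which is structurally the linearisation of NLS around the carrier wavetrain. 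This motivates the energy
\begin{equation*}
\Cal E(T) := \int_\R \lambda\bigl(|\DX W_1|^2 + |\DX W_2|^2\bigr)\,dX + 2\eb^{-2}\int_\R \lambda^2\,W_1^2\,dX + \|W_2\|_{L^2}^2.
\end{equation*}
The bounds \eqref{4.phi} make $\Cal E$ equivalent, uniformly in $\eb$, to the quantity on the left-hand side of \eqref{4.linest}.

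Differentiating $\Cal E$ in time and substituting the equations, I would check that the two potentially singular blocks cancel. First, the principal mixed terms $\pm 2\eb\int\lambda(PW_1)(PW_2)\,dX$ produced by $\frac{d}{dT}\int\lambda|\DX W_i|^2$ via $-2\int\lambda (PW_i)\DT W_i\,dX$ cancel against each other. Second, and crucially, the $\eb^{-1}$-order term $+4\eb^{-1}\int\lambda^2 W_1(PW_2)\,dX$ coming from the multiplier $-2\eb^{-1}\lambda W_1$ in the $W_2$-equation is exactly cancelled by $-4\eb^{-1}\int\lambda^2 W_1(PW_2)\,dX$ produced when differentiating $2\eb^{-2}\int\lambda^2 W_1^2$ and using the $-\eb P W_2$ term in the $W_1$-equation; this is the reason for the choice of coefficient $2$ in front of the $\eb^{-2}$-summand. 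All the surviving terms are controlled by writing $\lambda PW = \DX(\lambda\DX W)$, integrating by parts once to move a derivative onto a coefficient ($\hat r$, $\hat\phi$, or $H$), and applying Cauchy--Schwarz and Young's inequalities; the coefficients that arise ($\lambda$, $\DX\hat r$, $\DX^2\hat r$, $\DX\hat\phi$, $\DX^2\hat\phi$, $\DT\hat r$) are controlled in $L^\infty$ uniformly in $\eb$ by \eqref{4.phi} and the embedding $H^{s+2}\hookrightarrow W^{1,\infty}$. The dangerous coupling $-2\eb^{-1}\int\lambda W_1 W_2\,dX$ arising from the separate $L^2$-estimate for $W_2$ is absorbed by $\eb^{-1}\|W_1\|_{L^2}\|W_2\|_{L^2}\le \tfrac12\eb^{-2}\|W_1\|_{L^2}^2 + \tfrac12\|W_2\|_{L^2}^2$, and the forcing terms contribute at most $C(\|\DX H\|_{L^2}^2+\eb^{-2}\|H_1\|_{L^2}^2+\|H_2\|_{L^2}^2)$.

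These estimates combine into the differential inequality
\begin{equation*}
\tfrac{d}{dT}\Cal E(T) \le C\,\Cal E(T) + C\bigl(\|\DX H(T)\|_{L^2}^2 + \eb^{-2}\|H_1(T)\|_{L^2}^2 + \|H_2(T)\|_{L^2}^2\bigr),
\end{equation*}
with $C$ independent of $\eb$, and Gronwall's lemma together with the equivalence $\Cal E \sim \|\DX W\|_{L^2}^2 + \eb^{-2}\|W_1\|_{L^2}^2 + \|W_2\|_{L^2}^2$ yields \eqref{4.linest}. Existence of a solution in $C([0,T_0],H^1)$ follows from this a priori bound by a standard regularisation procedure — for instance, adding an artificial viscosity $\nu\DX^2$ to both equations, solving the regularised system by a Galerkin scheme, and passing to the limit $\nu\to 0$ using the $\eb$- and $\nu$-uniform bound — and uniqueness follows by applying the same estimate to the difference of two solutions (with zero initial data and zero forcing). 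The main obstacle is precisely the bookkeeping of the singular $\eb^{-1}$-order contributions in $\frac{d}{dT}\Cal E$: identifying the weight $\lambda^2$ and the coefficient $2\eb^{-2}$ in $\Cal E$ so that these dangerous cross-terms cancel identically is the heart of the argument, and it is exactly analogous to the energy functional used in spectral-stability proofs for the carrier wavetrain in the defocussing case.
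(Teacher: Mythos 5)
Your proposal is correct and follows essentially the same route as the paper: the energy $\int\lambda|\DX W|^2+2\eb^{-2}\int\lambda^2W_1^2$ is exactly the paper's $\|e^{\hat r}\DX W\|_{L^2}^2+2\eb^{-2}\|e^{2\hat r}W_1\|_{L^2}^2$ obtained there by testing with $-\DX(e^{2\hat r}\DX W_1)+2\eb^{-2}e^{4\hat r}W_1$ and $-\DX(e^{2\hat r}\DX W_2)$, with the same cancellation of the singular cross-terms, the same Cauchy--Schwarz absorption of the $\eb^{-1}(W_1,W_2)$ coupling in the $L^2$-estimate for $W_2$, and Gronwall. The only cosmetic difference is that you fold the $\|W_2\|_{L^2}^2$ bound into a single functional whereas the paper runs it as a second step; existence via regularisation is likewise the paper's (unspecified) ``standard approximation scheme''.
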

\vspace{.15cm}

\begin{proof}{\color{black} We restrict attention to a formal verification
    of the key estimate \eqref{4.linest}.
    The existence of a solution can then be confirmed
    {\it a posteriori} in a standard way using an appropriate
   approximation scheme.}
  \par
      Fix $\gamma=-1$, multiply the
 first and the second equations in \eqref{4.lin} by
 $-\partial_X(e^{2\hat r}\partial_X W_1)+2\eb^{-2}e^{4\hat{r}}W_1$ and
  $-\partial_X(e^{2\hat r} \partial_XW_2)$,
 integrate over $X$, take a sum, and use the following obvious identity:
$$
\partial_X^2W+2\hat r_X\partial_XW=e^{-2\hat r}\partial_X(e^{2\hat r}\partial_XW)\,.
$$
After cancellation of the leading terms in the right-hand side and integration by parts,
   this gives
 \begin{multline}\label{4.gr}
\frac12\frac d{dT}\Big( \|e^{\hat r}\partial_X W\|^2_{L^2}+2\eb^{-2}\|e^{2\hat {r}}W_1\|^2_{L^2}\Big)-
\Big(\partial_T\hat r, (e^{\hat r}\partial_XW)^2\Big) -\\[1mm]
- 4\eb^{-2} \big(\partial_T\hat r, (e^{2\hat r}W_1)^2\big)=
 -\(e^{2\hat r}\partial_X(e^{-2\hat r}(k+\partial_X\hat\phi)),|e^{\hat r}\partial_XW|^2\)+\\[2mm]
 +2\eb^{-2}\big(e^{-4\hat r}(e^{4\hat r}(k+\hat\phi_X))_X ,|e^{2\hat{r}}W_1|^2\big)+
 \big(H_1(T),-\partial_X(e^{2\hat r}\partial_X W_1)
 +2\eb^{-2}e^{4\hat{r}}W_1\big)+\\[2mm]
 +\big(H_2(T),-\partial_X(e^{2\hat r} \partial_XW_2)\big)\,.
 \end{multline}
Using \eqref{4.phi} with $s=0$, we have
$$
\|\partial_T\hat r\|_{L^\infty}+\|\hat r\|_{W^{1,\infty}}+\|\DX\hat\phi\|_{W^{1,\infty}}\le C\,,
$$
and therefore \eqref{4.gr} can be transformed to
 \begin{multline}\label{4.gra}
   \frac d{dT}\Big(\|e^{\hat r}\partial_X W\|^2_{L^2}+2\eb^{-2}\|e^{2\hat r}W_1\|^2_{L^2}\Big)-\\[2mm]
   - C\Big(\|e^{\hat r}\partial_X W\|^2_{L^2}+2\eb^{-2}\|e^{2\hat r}W_1\|^2_{L^2}\Big)\le C\Big(\|\partial_X H(T)\|^2_{L^2}+\eb^{-2}\|H_1(T)\|^2_{L^2}\Big)\,.
 \end{multline}
 The Gronwall inequality then gives the following estimate:
\begin{multline}\label{4.gr1}
\|\partial_X W\|^2_{L^2}+2\eb^{-2}\|W_1\|^2_{L^2}\le
C\big(\|\partial_X W(0)\|^2_{L^2}+\eb^{-2}\|W_1(0)\|^2_{L^2}\big)e^{C T}
+\\[2mm]
+ C\int_0^Te^{C(T-\tau)}\Big(\|\partial_XH(\tau)\|^2_{L^2}+\eb^{-2}\|H_1(\tau)\|^2_{L^2}\Big)\,d\tau
 \end{multline}
which coincides with \eqref{4.linest}
up to the $L^2$-norm of $W_2$ which we
 now need to estimate. To this end, multiply the second equation of \eqref{4.lin} by $W_2$ and
 integrate over $X$. {\color{black} This gives
\begin{multline*}
\frac12\frac d{dt}\|W_2\|^2_{L^2}=-\eb(\DX W_1,\DX W_2)+2\eb(\DX \hat r\DX W_1,W_2)+\\+
2\gamma\eb^{-1}e^{2\hat r}(W_1,W_2)-2((k+\DX\hat\phi)\DX W_2,W_2)+(H_2,W_2).
\end{multline*}
Using again \eqref{4.phi} together with Cauchy-Schwarz inequality,}
 we get
  \[
\frac12\frac d{dT}\|W_2\|^2_{L^2}- C\|W_2\|^2_{L^2}\le
C\Big(\eb\|\partial_X W\|^2_{L^2}+\eb^{-2}\|W_1\|^2_{L^2}+\|H_2(T)\|^2_{L^2}\Big)\,,
\]
which together with \eqref{4.gr1} gives
\begin{multline}\label{4.gr2}
\|W_2(T)\|_{L^2}^2\le
 C\Big(\|\partial_X W(0)\|^2_{L^2}+\eb^{-2}\|W_1(0)\|^2_{L^2}+\|W_2(0)\|^2_{L^2}\Big)e^{CT}+\\[2mm]+
 C\int_0^Te^{C(T-s)}\Big(\|\partial_X H(s)\|^2_{L^2}+\eb^{-2}\|H_1(s)\|^2_{L^2}+\|H_2(s)\|^2_{L^2}\Big)\,ds\,,
\end{multline}
which gives the desired estimate \eqref{4.linest} and finishes the proof of the theorem.
\end{proof}
\begin{remark}
  {\color{black} We  estimate the solution $W$ of problem \eqref{4.lin} in the space $H^1$ only,
  which is sufficient for the present purposes.
   The analogue of this estimate in $H^s$ with  $s>1$  can be verified
  analogously to the proof of Proposition \ref{Prop6.main}
  by differentiating equation \eqref{4.lin} in $X$ sufficiently many times.}
\end{remark}

  We are now ready to return to the non-linear equation \eqref{4.real} with zero initial data.  For this we need $\mathcal{H}$ in (\ref{H-Res-def}) to
  satisfy
$$
\Cal H(T)\sim O(\eb^{2n+1})\,,
$$
and this follows from the estimate
(\ref{4.rres}) with $s=0$. Now, inverting the linear part
and using \eqref{4.linest} together with the fact that $H^1$
is an algebra we arrive at
 \begin{multline}\label{4.grr}
\|W(T)\|_{H^1}^2\le C\eb^{-2}\sup_{T\le T_0}\Big(\|\mathcal H(T)(1+W(T))\|_{H^1}^2+
\eb^{\color{black}-2}\|F(W(T))\|^2_{H^1}\Big)\le\\[2mm] \le C\eb^{4n}+C\eb^{4n}\sup_{T\le T_0}\|W(T)\|^2_{H^1}+\\+
C\eb^{\color{black}-4}\sup_{T\le T_0}\|W(T)\|^4_{H^1}\(1+\sup_{T\le T_0}\|W(T)\|^2_{H^1}\).
 \end{multline}
{\color{black} Let $\widetilde W:=\eb^{-2n}W$. Then, the new function satisfies
$$
\|\widetilde W(T)\|_{H^1}^2   \le C+C\eb^{4n}\sup_{T\le T_0}\|\widetilde W(T)\|^2_{H^1}+
C\eb^{8n-4}\sup_{T\le T_0}\|\widetilde W(T)\|^4_{H^1}
\(1+\eb^{4n}\sup_{T\le T_0}\|\widetilde W(T)\|^2_{H^1}\)
$$
and we see that in the case $n\ge1$, the obtained estimate guarantees that $\|\widetilde W(T)\|_{H^1}\le 2C$
for all $T\in[0,T_0]$ for some $T_0>0$ which is independent of $\eb\to0$.
}
Thus, we have proved the following result.
\vspace{.15cm}

\begin{corollary}\label{Cor4.main} Let the above assumptions hold and suppose $n\ge1$. Then, for $\eb$ small enough,
  equation \eqref{4.real}
  with zero initial data possesses a unique solution $W(T)$ on
  the time interval $T\le T_0$,
 and the following estimate holds:
 \begin{equation}\label{4.good}
\sup_{T\le T_0}\|W(T)\|_{H^1(\R)}\le C\eb^{2n}\,.
 \end{equation}
 The constant $C$ depends on the norms of approximating solution, but is independent of $\eb$.
\end{corollary}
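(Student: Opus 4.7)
The plan is to treat the nonlinear problem \eqref{4.real} with zero initial data as a perturbation of the linear system \eqref{4.lin} and invoke Theorem \ref{Th4.lin} inside a bootstrap / continuity argument. More precisely, I would view the right-hand sides $\Ree\{\Cal H(1+W)\}+\eb^{-1}F_1(W)$ and $\Imm\{\Cal H(1+W)\}+\eb^{-1}F_2(W)$ as external forces $H_1,H_2$ in \eqref{4.lin}, so that the linearised energy estimate \eqref{4.linest} produces a self-consistent inequality for $\|W(T)\|_{H^1}$ on $[0,T_0]$. Existence and uniqueness then follow in a standard way once the a~priori bound is in place.

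The first ingredient is the smallness of $\Cal H$. From its definition \eqref{H-Res-def} together with the residual estimate \eqref{4.rres} for $s=0$, and the $W^{1,\infty}$-control of $e^{\pm\hat r}$ provided by \eqref{4.phi}, I would deduce $\|\Cal H(T)\|_{H^1}\le C\eb^{2n+1}$ on $[0,T_0]$, where the decisive power is governed by the $\eb^{-1}\Res_\phi$ term. The second ingredient is that $F$ consists of only quadratic and cubic monomials in $W$, so the Banach-algebra property of $H^1(\R)$ yields $\|F(W)\|_{H^1}\le C\|W\|_{H^1}^2(1+\|W\|_{H^1})$.

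Inserting these estimates into \eqref{4.linest}, and carefully tracking the singular $\eb^{-2}$ weight attached to $H_1$ in the linear energy, I would arrive at an a~priori inequality of the form
$$
\|W(T)\|_{H^1}^2\le C\eb^{4n}+C\eb^{4n}\|W(T)\|_{H^1}^2+C\eb^{-4}\|W(T)\|_{H^1}^4\bigl(1+\|W(T)\|_{H^1}^2\bigr)
$$
on the interval of existence. The natural way to close this is to rescale $\widetilde W:=\eb^{-2n}W$, after which the inequality becomes
$$
\|\widetilde W(T)\|_{H^1}^2\le C+C\eb^{4n}\|\widetilde W(T)\|_{H^1}^2+C\eb^{4n-4}\|\widetilde W(T)\|_{H^1}^4\bigl(1+\eb^{4n}\|\widetilde W(T)\|_{H^1}^2\bigr).
$$
The exponent $4n-4$ is non-negative precisely when $n\ge1$, which is exactly the hypothesis of the corollary; a standard continuity argument starting from $\widetilde W(0)=0$ then yields a uniform bound $\|\widetilde W(T)\|_{H^1}\le 2C$ on $[0,T_0]$, which is equivalent to \eqref{4.good}.

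The principal obstacle, and the reason the statement is not trivial, is the strongly singular $\eb^{-1}$ prefactor in the nonlinearity $F(W)$, which compounds with the already singular $\eb^{-2}$ weight in \eqref{4.linest} to produce the $\eb^{-4}$ in the quartic term above. Without the $\eb^{4n}$ gain contributed by the higher-order Whitham correction of order $n$, the quartic/quintic contributions cannot be absorbed into the linear part and the continuity argument fails. The threshold $n\ge1$ is precisely what is needed for the rescaling to be uniform as $\eb\to0$, which also matches the remark after Theorem \ref{Th0.main} explaining why $n=0$ is insufficient.
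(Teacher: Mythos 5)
Your argument is essentially the paper's own proof: you feed the terms $\Cal H(1+W)$ and $\eb^{-1}F(W)$ into Theorem \ref{Th4.lin} as external forces, use $\|\Cal H\|_{H^1}=O(\eb^{2n+1})$ from \eqref{4.rres} and the $H^1$-algebra bound on the quadratic--cubic nonlinearity to reach exactly the self-consistent inequality \eqref{4.grr}, and then close the bootstrap via the rescaling $\widetilde W=\eb^{-2n}W$ under the hypothesis $n\ge1$. The only divergence is the exponent on the rescaled quartic term, where you get $\eb^{4n-4}$ while the paper writes $\eb^{8n-4}$; your arithmetic is the correct one (the paper appears not to have divided that term by $\eb^{4n}$), and the threshold $n\ge1$ and the conclusion \eqref{4.good} are unaffected.
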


\noindent Estimate \eqref{4.good} is a standard corollary of \eqref{4.grr} and the existence of a
solution can be obtained using the semigroup technique since
\eqref{4.real} has a semilinear structure.
\par

This corollary gives the required estimate for the distance
between exact and approximate solutions of the NLS, {\color{black}
  recalling that, by definition,
$$
W(T)=(\widetilde\Psi(T)-\widehat\Psi(T))e^{-\ri\eb^{-1}\widehat \Theta(T)}\,,
$$
}
thereby completing the proof of Theorem \ref{Th0.main}.

\section{Linear stability of wavetrains and validity}
\label{sec-stability-and-validity}
\setcounter{equation}{0}

{\color{black} A surprise in Theorem \ref{Th0.main} is that the
  power of  $\eb$ in the estimate (\ref{est0-main}) is
  $\eb^{2n}$ rather than the more natural $\eb^{2n+1}$.  In this
  section we show why $\eb^{2n}$ is essential and can not be
  improved.  It is intimately connected with an algebraic
  instability of the wavetrain (\ref{nls-wavetrain}).
  \par
  Firstly, in contrast to the Gevrey
  spaces approach developed in \cite{ds09}, the spectral stability
  of the background wavetrain is a {\it necessary}
  condition for the validity of Whitham approximations in Sobolev spaces.
  For this reason,
it looks natural to use the energy norms related with this spectral stability in the proof of
 validity.  Actually, our presentation of the exact solution in the form of
  \eqref{Psi-A-decomposition} and the choice of the energy norm for equation \eqref{4.lin} are
   inspired by the spectral stability arguments.
   \par
   The linearization of the initial NLS (\ref{nls-1})
   on a wavetrain solution $e^{\ri kx+\ri\omega t}$ reads
$$
\ri\partial_tZ+\partial_x^2 Z+\gamma(2Z+e^{2(\ri kx+\ri\omega t)}\bar Z)=0,\ \ \gamma=\omega+k^2\,.
$$
In order to make this equation autonomous, let $Z=e^{\ri kx+\ri\omega t}W$, giving
 \begin{equation}\label{linear}
\Dt W-\ri\partial_x^2 W+2k\partial_x W-\ri \gamma(W+\bar W)=0\,
 \end{equation}
or, separating, real and imaginary parts,
\begin{equation}\label{llinear}
\begin{cases}
\Dt W_1=-\partial_x^2W_2-2k\partial_x W_1,\\
\Dt W_2=\partial_x^2 W_1-2k\partial_x W_2+2\gamma W_1
\end{cases}
\end{equation}
which coincides with the unscaled version ($\eb=1$) of equation \eqref{4.lin} with
$\hat r=\hat \phi=0$ and zero right-hand sides.

This equation possesses the conservation law
\begin{equation}\label{5.cons}
\frac d{dt}\(\|\partial_x W\|^2_{L^2}-2\gamma\|\operatorname{Re}W\|^2_{L^2}\)=0\,,
\end{equation}
which gives the spectral stability of the wavetrain for $\gamma<0$ and also is a prototype for
 our key estimate \eqref{4.linest}.
\par
Note also that \eqref{5.cons} gives the stability of the linearized equation in the space
\[
(W_1,W_2)\in H^1\times \dot H^1\,,
\]
where $\dot H^1$ is the corresponding
homogeneous Sobolev space, which is enough for spectral stability, but does not imply
the stability in a natural phase space $H^1\times H^1$ or $L^2\times L^2$.
\par
Actually, a bit more accurate analysis shows that problem \eqref{linear} is {\it unstable} in
 a natural space $H^1\times H^1$, namely, the $L^2$-norm of $W_2(t)$ may grow (at most linearly in time)
 despite the fact that $\|\partial_x W_2(t)\|$ remains bounded. As usual, such an instability is caused by a Jordan cell.
 Indeed, after the Fourier transform in $x$, \eqref{linear} reads
 $$
 \frac d{dt}\(\begin{matrix} \Cal F W_1\\\Cal F W_2\end{matrix}\)=\Bbb M(\xi)
 \(\begin{matrix} \Cal F W_1\\\Cal F W_2\end{matrix}\),\ \ \
 \Bbb M(\xi):=\(\begin{matrix}-2k\ri\xi,&\xi^2\\-\xi^2-\gamma,&-2k\ri\xi\end{matrix}\)
$$
and we see the Jordan cell in $\Bbb M(\xi)$ at $\xi=0$ which is responsible for the long-wave instability
and produces the linear growth of the $L^2$-norm of  $W_2(t)$ in time.
\par
In the scaled variables this linear growth produces an extra  factor $\eb^{-1}T\le \eb^{-1}T_0$
in the corresponding estimates. In particular, the factor $\eb^{-2}$ in the middle part of \eqref{4.grr}
is caused exactly by this instability. In turn, this decreases the power of $\eb$ in the right-hand
side of Theorem \ref{Th0.main} from the expected $\eb^{2n+1}$ to $\eb^{2n}$
and requires the use of the higher order Whitham approximations
in \S\ref{sec-approx-solns}.
}

\section{Concluding remarks}
\label{sec-cr}
\setcounter{equation}{0}

The main result of the paper is
validity of the Whitham theory as an
approximation to solutions that modulate a periodic travelling
wave of the NLS equation.  However, the theory does not rely
on the structure of NLS and so will carry over to other equations,
particularly non-integrable nonlinear wave equations, when
the Whitham equations are strictly hyperbolic, and the
appropriate energy functionals can be constructed.  As shown
  in \S\ref{sec-stability-and-validity}, replacing $\eb^{2n}$ with
  $\eb^{2(n+1)}$ on the right-hand side of (\ref{est0-main}) in
Theorem \ref{Th0.main} is unavoidable if we are working in $H^1$. However,
 there is still a room for refining the estimates utilizing homogeneous Sobolev spaces where this instability disappears.

An extension of interest is validity of multiphase WMT as an approximation
to modulation of multiphase periodic travelling waves of the
coupled nonlinear Schr\"odinger equation.  A proof of validity
in Gevrey spaces for this case has been given in \cite{bks20}.  The problem
with multiphase WMEs is that the characteristics can be
elliptic, hyperbolic, or mixed \cite{br19}.  Gevrey spaces are
indifferent to characteristic type, but the only hope for a proof
of validity in Sobolev spaces for the multiphase WMEs for coupled NLS
is to restrict to the case where all characteristics are hyperbolic,
and do not change type with time.

\subsection*{Acknowledgements}

The research of AK and SZ is partially supported by EPSRC Grant EP/P024920/1
on {\it Finite-dimensional reduction, inertial manifolds, and homoclinic structures in dissipative PDEs} and
the work of SZ is also supported by the grant  19-71-30004   of Russian Science Foundation.

\bibliographystyle{amsplain}

\end{document}